\newtheorem{theorem}{Theorem}[section]
\newtheorem{lemma}[theorem]{Lemma}
\newtheorem{remark}[theorem]{Remark}
\newtheorem{proposition}[theorem]{Proposition}
\newtheorem{corollary}[theorem]{Corollary}
\renewcommand{\theequation}{
\arabic{section}.\arabic{equation}%
}
\newcommand{\re}{\mathbb{R}}
\newcommand{\sob}[1]{W^{1,p}\left (#1\right )}
\newcommand{\norm}[1]{\Vert #1\Vert}
\newcommand{\leb}[2]{L^{#1}\left(#2\right )}
\def\D{\nabla}
\begin{document}
\title[Sharp estimates of radial minimizers of $p$-Laplace equations]{Sharp estimates of radial minimizers of $p$-Laplace equations}
\author{Miguel Angel Navarro \and Salvador Villegas}
\thanks{The authors have been supported by the MEC Spanish grant MTM2012-37960}
\address{Departamento de An\'{a}lisis
Matem\'{a}tico, Universidad de Granada, 18071 Granada, Spain.}
\email{mnavarro\_2@ugr.es \and svillega@ugr.es}
\begin{abstract}
In this paper we study semi-stable, radially symmetric and decreasing solutions $u\in W^{1,p}(B_1)$ of $-\Delta_p u=g(u)$ in $B_1\setminus\{ 0\}$, where $B_1$ is the unit ball of $\mathbb{R}^N$, $p>1$, $\Delta_p$ is the $p-$Laplace operator and $g$ is a general locally Lipschitz function. We establish sharp pointwise estimates for such solutions. As an application of these results, we obtain optimal pointwise estimates for the extremal solution and its derivatives (up to order three) of the equation $-\Delta_p u=\lambda f(u)$, posed in $B_1$, with Dirichlet data $u|_{\partial B_1}=0$, where the nonlinearity $f$ is an increasing $C^1$ function with $f(0)>0$ and
$
\lim_{t\rightarrow+\infty}{\frac{f(t)}{t^{p-1}}}=+\infty.
$
In addition, we provide, for $N\geq p+4p/(p-1)$, a large family of semi-stable radially symmetric and decreasing unbounded $W^{1,p}(B_1)$ solutions.
\end{abstract}
\maketitle
\section{Introduction and main results}
This paper is concerned with the semi-stability of radially symmetric and decreasing solutions $u\in W^{1,p}(B_1)$ of
\begin{equation}
-\Delta_p u=g(u)\text{  in }B_1\setminus \left\lbrace 0\right\rbrace ,
\label{mainequation}
\end{equation}
where $p>1$, $\Delta_p$ is the $p-$Laplace operator, $B_1$ is the unit ball of $\re^N$, and $g:\re\longrightarrow\re$ is a locally Lipschitz function.

By abuse of notation, we write $u(r)$ instead of $u(x)$, where
$r=\vert x\vert$ and $x\in \re^N$. We denote by $u_r$ the radial derivative of a radial function $u$.

A radial solution $u\in H^1(B_1)$ of \eqref{mainequation} such that $u_r(r)<0$ for all $r\in (0,1)$ is
called semi-stable if

$$
\int_{B_1}{(p-1)\vert u_r\vert^{p-2}\vert\xi_r\vert^2-g'(u)\xi^2}\geq 0,
$$ for every radially symmetric function $\xi\in C_c^1\left (B_1\setminus\{0\}\right )$.

Note that the above expression is nothing but the
second variation of the energy functional associated to \eqref{mainequation}:

\begin{equation}\label{functional}
E_\Omega(u):=\frac{1}{p}\int_{\Omega}{\vert \nabla u\vert^p\,dx}-\int_{\Omega}{G(u) \, dx},
\end{equation}
\noindent where $G'=g$ and $\Omega\subset B_1$. Thus, if $u$ is a radial local
minimizer of \eqref{functional} with $\Omega=B_1$ (i.e.,  for every $\delta \in (0,1)$ there exists $\varepsilon_\delta>0$ such that $E_{B_1\setminus\overline{B_\delta}}(u)\leq E_{B_1\setminus\overline{B_\delta}}(u+\xi)$, for all radial functions $\xi\in C_c^1 \left(B_1\setminus\overline{B_\delta}\right)$) satisfying $\Vert \xi \Vert_{C^1}\leq \varepsilon_\delta$), then $u$ is a semi-stable solution of \eqref{mainequation}. Other general situations include semi-stable solutions: for instance, minimal solutions, extremal solutions, and also some solutions between a sub and a supersolution (see \cite[Rem. 1.7]{MR2476421} for more details). All the results obtained in this paper were obtained by the second author in \cite{MR2885956} for the Laplace operator ($p=2$).

As an application of some general results obtained in this paper
for this class of solutions (for arbitrary $g\in C^1(\Bbb{R})$), we consider the following problem

\begin{equation}
\left\lbrace
\begin{array}{rcll}
-\Delta_p u&=&\lambda f(u)&\text{  in }B_1,\\
u&>&0&\text{  in }B_1,\\
u&=&0&\text{  in }\partial B_1,\\
\end{array}
\right.
\label{equation_ext}
\refstepcounter{equation}
\tag{$\theequation_{\lambda,p}$}
\end{equation}
where $\lambda>0$ and $f$ is an increasing $C^1$ function with $f(0)>0$ and
\begin{equation}
\lim_{t\rightarrow+\infty}{\frac{f(t)}{t^{p-1}}}=+\infty.
\label{ext_cond_function}
\end{equation}

This problem is studied by Cabr\'e and Sanch\'on in \cite{MR2276329} for general smooth bounded domains $\Omega$ of  $\mathbb{R}^N$. It is proved that there exists a positive parameter $\lambda^*$ such that if $\lambda\in (0,\lambda^*)$ then \eqref{equation_ext} admits a minimal (smallest) solution $u_\lambda\in C^1(\overline{\Omega})$ and if $\lambda\in (\lambda^*,+\infty)$ then \eqref{equation_ext} admits no regular solution. In addition, for $\lambda\in (0,\lambda^*)$ the minimal solution $u_\lambda$ is semi-stable (in a similar sense of the definition when $\Omega=B_1$). On the other hand, we may consider the increasing limit
$$u^*:=\lim_{\lambda\uparrow\lambda^*}u_\lambda.$$

In the case $p=2$ it is well-known that $u^*$ is a weak solution of \eqref{equation_ext}, for $\lambda=\lambda^*$. It is called the extremal solution. For general $p$, $\Omega$ and $f$, it is not known if $u^*$ is a weak solution of \eqref{equation_ext}, for $\lambda=\lambda^*$. In the case $\Omega=B_1$, Cabr\'e, Capella and Sanch\'on \cite{MR2476421} proved that $u^*$ is actually a semi-stable radially decreasing energy solution (i.e. $u^*\in W_0^{1,p}$) of \eqref{equation_ext}. Hence we can apply to the extremal solution the results obtained in this paper for this kind of solutions.

We refer to \cite{MR2373729,MR2569324} for surveys on minimal and extremal solutions and to \cite{MR1605678,MR2361735,MR2338421,MR2393398,MR2143221,MR2387858,MR2286013,MR0340701,MR1779693,MR2317200,MR3010053} for other interesting results in the topic of extremal solutions.

The main result obtained in \cite{MR2476421} related to the extremal solution of \eqref{equation_ext} is the following

\begin{theorem}(\cite{MR2476421}).\label{cabrecapellasanchon}
Let $\Omega=B_1$, $p>1$, and that $f$ satisfies \eqref{ext_cond_function}. Let $u^\ast$ be the extremal solution of \eqref{equation_ext}. We have that

\begin{enumerate}

\item[i)] If $N<p+4p/(p-1)$, then $u^\ast \in L^\infty (B_1)$.

\

\item[ii)] If $N=p+4p/(p-1)$, then $u^\ast(r)\leq C\, \left\vert \log
r \right\vert $,  $\forall r\in (0,1)$ and for some constant $C$.

\

\item[iii)] If $N>p+4p/(p-1)$, then $\displaystyle{u^\ast(r)\leq C\, r^{-\frac{1}{p}\left(N-2\sqrt{\frac{N-1}{p-1}}-p-2\right)}\left\vert \log r
\right\vert^\frac{1}{p}} \ $,  $\forall r\in (0,1)$ and for some constant $C$.

\

\item[iv)] If $N\geq p+4p/(p-1)$, then $\displaystyle{\vert u_r^\ast(r)\vert\leq C\, r^{-\frac{1}{p}\left(N-2\sqrt{\frac{N-1}{p-1}}-2\right)}\left\vert \log r
\right\vert^\frac{1}{p}} \ $,  $\forall r\in (0,1)$ and for some constant $C$.

\end{enumerate}

\end{theorem}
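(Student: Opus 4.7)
My plan is to implement the radial test-function method of Cabré--Capella, adapted to the $p$-Laplace setting. Since the extremal solution $u^*$ arises as the monotone limit of the minimal, radially decreasing, semi-stable solutions $u_\lambda$ as $\lambda\uparrow\lambda^*$, it inherits the radial semi-stability inequality
\begin{equation*}
\int_0^1 r^{N-1}\bigl[(p-1)|u_r^*|^{p-2}\xi_r^{\,2} - \lambda^* f'(u^*)\xi^2\bigr]\,dr \geq 0
\end{equation*}
for every radial $\xi \in C_c^1((0,1))$. The overall idea is to choose $\xi$ cleverly so as to convert this inequality, via the radial ODE, into a pointwise estimate on $u^*$ and $u_r^*$.

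The first step is to exploit the radial ODE. Writing $\phi := -u_r^*>0$, the equation becomes $(r^{N-1}\phi^{p-1})_r = \lambda^* r^{N-1}f(u^*)$; differentiating once more in $r$ yields an identity expressing $\lambda^* f'(u^*)\phi$ as a combination of $\phi$ and its first two radial derivatives. Substituting $\xi = \eta(r)\phi(r)$ into the semi-stability inequality and using this identity to integrate by parts away the $f'(u^*)$-term, one arrives, after substantial bookkeeping, at a weighted Hardy-type inequality of the form
\begin{equation*}
(p-1)\int_0^1 r^{N-1}\phi^p\,\eta_r^{\,2}\,dr \;\geq\; (N-1)\int_0^1 r^{N-3}\phi^p\,\eta^2\,dr
\end{equation*}
for every $\eta \in C_c^1((0,1))$. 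The critical dimension $N = p + 4p/(p-1)$ and the exponent $\alpha_* := \sqrt{(N-1)/(p-1)}$ emerge precisely from the balance $(p-1)\alpha_*^{\,2} = N-1$.

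The second step is to select $\eta$ optimally. Taking $\eta$ to be essentially $r^{-\alpha_*}$ on a dyadic window, with smooth cutoffs near $0$ and $1$, saturates the Hardy inequality at leading order, and the subleading cutoff contribution yields a finite weighted $L^p$-bound on $\phi$. Averaging this bound on a shell $[r/2,r]$ and invoking the radial monotonicity of $u^*$ then upgrades it to the pointwise estimate (iv) for $|u_r^*(r)|$, with the $|\log r|^{1/p}$ factor appearing at the critical dimension where the optimal cutoff contributes a logarithmic term. Estimate (iii), valid for $N$ strictly above the threshold, follows from (iv) by integrating $\phi$ from $r$ to $1$ and using $u^*(1)=0$; estimate (i) follows from the same weighted bound in the subcritical range, where the weight is now integrable up to the origin and therefore gives a uniform bound; and the borderline logarithmic estimate (ii) at the critical dimension requires a slightly different choice of test function, since it cannot be recovered by naively integrating (iv).

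The principal obstacle is the algebraic bookkeeping in the first step: the integration by parts must be organised precisely enough to expose the sharp constant $N-1$, and hence the correct critical dimension $p+4p/(p-1)$, on the right-hand side. A secondary technical point is the limiting justification of the borderline test function $\eta \approx r^{-\alpha_*}$, which sits exactly on the edge of admissibility and forces careful handling of the boundary cutoffs and their logarithmic correctors, particularly in the borderline case (ii).
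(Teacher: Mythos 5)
This statement is quoted from \cite{MR2476421}; the paper gives no proof of it, so your attempt can only be compared with the original Cabr\'e--Capella--Sanch\'on argument and with the sharper machinery the paper builds to supersede it. Your outline is essentially a faithful reconstruction of the original proof: the weighted Hardy-type inequality you derive in the first step is exactly \cite[Lem.~2.2]{MR2476421}, which is also the starting point \eqref{des_2} of Section~2 here (and appears verbatim in the form you wrote as \eqref{des_5}), and the power test functions $\eta\approx r^{-\alpha_*}$ with $(p-1)\alpha_*^2=N-1$ are the ones used both there and in Lemma~\ref{Lemma_1}. The instructive contrast with the paper is in how pointwise information is extracted from the integral bound $\int_0^r|u_r|^pt^{N-1}\,dt\leq Kr^{2\alpha_*+2}$: the paper applies H\"older's inequality only on dyadic shells $[r/2,r]$ (Proposition~\ref{prop_1}) and then sums a geometric series, and for $|u_r|$ uses the monotonicity of the flux $-r^{N-1}|u_r|^{p-2}u_r$ (proof of Theorem~\ref{th_2}(i)); this removes the $|\log r|^{1/p}$ factor entirely and yields the optimal Theorem~\ref{th_ext}. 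Your single-scale extraction (global H\"older from $r$ to $1$, or an optimized near-critical exponent $\alpha<\alpha_*$) is precisely what produces the spurious logarithm in the statement you are proving.

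Two points in your sketch need tightening. First, ``invoking the radial monotonicity of $u^*$'' is not enough to pass from a bound on $\int_r^{2r}|u_r^*|^pt^{N-1}\,dt$ to a pointwise bound on $|u_r^*(r)|$: what is actually needed is that $r^{N-1}|u_r^*|^{p-1}$ is nondecreasing, which follows from the ODE and $\lambda f\geq 0$ (compare Lemma~\ref{lem_sol_ext}(i) and the proof of Theorem~\ref{th_2}(i)); monotonicity of $u^*$ itself gives nothing here. Second, your account of the origin of the logarithm is off: the factor $|\log r|^{1/p}$ in items (iii) and (iv) is present for every $N\geq p+4p/(p-1)$, not only ``at the critical dimension.'' It arises because the endpoint test function $t^{-\alpha_*}$ sits on the boundary of admissibility, so one must either work with $t^{-\alpha}$, $\alpha<\alpha_*$, whose constant degenerates as $\alpha\uparrow\alpha_*$ and then optimize $\alpha$ in terms of $r$, or accept a logarithmic contribution from the cutoff; either way the loss occurs for all supercritical $N$. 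With these two clarifications your outline is a correct (non-sharp) proof of the stated theorem, along the same lines as its source.
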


\

In this paper we establish sharp pointwise
estimates for $u^\ast$ and its derivatives (up to order three). We improve the above theorem, answering
affirmatively to an open question raised in \cite{MR2476421}, about the
removal of the factor $\left\vert \log r\right\vert^\frac{1}{p}$.

\begin{theorem} Let $p>1$. Suppose that $f$ satisfies \eqref{ext_cond_function}. Let $u^*$ be the extremal solution of \eqref{equation_ext}. We have that

\begin{itemize}

\item[i)] If $p\leq N<p+4p/(p-1)$, then $u^*(r)\leq C(1-r)$, $\forall r\in(0,1]$.

\

\item[ii)] If $N=p+4p/(p-1)$, then $u^*(r) \leq C\vert \log r\vert ,\;\forall r\in (0,1].$

\

\item[iii)] If $N>p+4p/(p-1)$, then \[u^*(r)\leq C\left (r^{{-\frac{1}{p}\left(N-2\sqrt{\frac{N-1}{p-1}}-p-2\right)}}-1\right ),\;\forall r\in (0,1].\]

\

\item[iv)] If $N\geq p+4p/(p-1)$, then
\[
\vert\partial_r^{(k)} u^*(r)\vert \leq Cr^{{-\frac{1}{p}\left(N-2\sqrt{\frac{N-1}{p-1}}+(k-1)p-2\right)}},\;\forall r\in (0,1],\,\forall k\in\{1,2\}.
\]
\

\item[v)] If $N\geq p+4p/(p-1)$, and $f$ is convex, then
\[
\vert u_{rrr}^*(r)\vert \leq Cr^{{-\frac{1}{p}\left(N-2\sqrt{\frac{N-1}{p-1}}+2p-2\right)}},\;\forall r\in (0,1].
\]

\end{itemize}
Where $C=C_{N,p}\min\limits_{t\in[1/2,1]}{\vert u_r(t)\vert}$, and $C_{N,p}$ is a constant depending only on $N$ and $p$ .
\label{th_ext}
\end{theorem}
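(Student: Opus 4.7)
The plan is to reduce Theorem~\ref{th_ext} to the general sharp pointwise estimates for radial, decreasing, semi-stable $W^{1,p}(B_1)$ solutions of $-\Delta_p u=g(u)$ in $B_1\setminus\{0\}$ developed earlier in this paper. By Cabr\'e--Capella--Sanch\'on, the extremal solution $u^*$ is precisely such a solution for $g=\lambda^* f$ with $u^*(1)=0$, so the general machinery applies directly, and the constant $C_{N,p}\min_{[1/2,1]}|u_r^*|$ appearing in the conclusion is exactly the one produced by those general estimates.

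The first and most important step is to apply the sharp derivative estimate for semi-stable radial decreasing solutions to obtain
\[
|u_r^*(r)|\leq C\, r^{-\frac{1}{p}\left(N-2\sqrt{(N-1)/(p-1)}-2\right)},\qquad r\in(0,1].
\]
This is case (iv) for $k=1$. Integrating from $r$ to $1$ and using $u^*(1)=0$ produces (i)--(iii): the exponent $\alpha:=\frac{1}{p}(N-2\sqrt{(N-1)/(p-1)}-2)$ equals $1$ precisely when $N=p+4p/(p-1)$ (a short algebraic check), so integration yields a Lipschitz-type bound in the subcritical range, a $|\log r|$ in the critical case, and the power $r^{1-\alpha}-1$ in the supercritical range. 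In case (i) the bound $u^*(r)\leq C(1-r)$ is sharpened from a bounded one by combining the derivative estimate near $0$ with standard boundary regularity for the regular minimal solutions, then passing to the limit.

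For (iv) with $k=2$ I would solve the radial $p$-Laplace equation algebraically for $u^*_{rr}$:
\[
u^*_{rr}=\frac{N-1}{(p-1)\,r}|u^*_r|-\frac{\lambda^* f(u^*)}{(p-1)|u^*_r|^{p-2}}.
\]
The first term is bounded by the $k=1$ estimate multiplied by $r^{-1}$, which is exactly the exponent asserted. For the second, integrate the conservative form $(r^{N-1}|u^*_r|^{p-1})_r=r^{N-1}\lambda^* f(u^*)$ on $[0,r]$; since $f(u^*)(\cdot)$ is non-increasing in $r$ (because $f$ is increasing and $u^*$ decreases), this yields $\lambda^* f(u^*)(r)\leq C\,|u^*_r(r)|^{p-1}/r$, so this summand has the same order as the first. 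For (v), differentiate the ODE once more: the convexity of $f$ ensures that $(f(u^*))_r=f'(u^*)u^*_r\leq 0$ has a definite sign, preventing cancellations, and an extra factor $r^{-1}$ appears naturally, giving the stated exponent.

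The main obstacle is the sharp derivative estimate invoked in Step~1. It is the central analytic contribution of the paper and relies on a careful choice of radial test function in the semi-stability inequality (typically of the form $\xi=|u_r|\varphi(r)$ with $\varphi$ a suitable weighted cutoff) combined with a Hardy-type inequality whose optimal constant generates the exponent $N-2\sqrt{(N-1)/(p-1)}$. Removing the spurious factor $|\log r|^{1/p}$ appearing in Theorem~\ref{cabrecapellasanchon}, which answers affirmatively the open question of Cabr\'e--Capella--Sanch\'on, is the decisive technical point and is settled by those general pointwise estimates.
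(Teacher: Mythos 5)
Your reduction of parts (i)--(iv) to the general estimates is workable, though it takes a slightly different route from the paper for (i)--(iii): the paper does not integrate the derivative bound but applies Theorem~\ref{th_1} directly (which controls $|u(r)|$ itself via a dyadic telescoping of $|u(r)-u(r/2)|$ and holds with no sign assumption on $g$), converts $\Vert u^*\Vert_{W^{1,p}(B_1\setminus \overline{B_{1/2}})}$ into $\min_{[1/2,1]}|u_r^*|$ through Lemma~\ref{lem_sol_ext}, and handles $r\in[1/2,1]$ by the Harnack-type comparison $\max_{[1/2,1]}|u_r^*|\leq 2^{N/(p-1)}\min_{[1/2,1]}|u_r^*|$. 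Your integration route gives the right exponents (indeed the exponent of $|u_r^*|$ equals $-1$ exactly at $N=p+4p/(p-1)$), but note that Theorem~\ref{th_2}~i) is only stated for $N\geq p+4p/(p-1)$ and $r\in(0,1/2]$, so for case (i) you would have to re-derive the derivative bound in the subcritical range rather than cite it; also your appeal to ``standard boundary regularity for the minimal solutions and passing to the limit'' is unnecessary and vaguer than the elementary monotonicity facts of Lemma~\ref{lem_sol_ext} that the paper uses near $r=1$. Your argument for (iv) with $k=2$ is essentially the paper's: the integration of $(r^{N-1}|u_r^*|^{p-1})_r=r^{N-1}\lambda f(u^*)$ against the monotonicity of $f(u^*(\cdot))$ is exactly inequality \eqref{ineq_g}, yielding $|u^*_{rr}|\leq \frac{2N-1}{p-1}\frac{|u^*_r|}{r}$.

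The genuine gap is in part (v). Differentiating the radial equation gives
\[
u^*_{rrr}=-\frac{1}{p-1}\left(\frac{\lambda^* f'(u^*)u^*_r}{|u^*_r|^{p-2}}-(p-2)\frac{u^*_ru^*_{rr}\,\lambda^* f(u^*)}{|u^*_r|^{p}}-\frac{N-1}{r^2}u^*_r+\frac{N-1}{r}u^*_{rr}\right),
\]
and every term except the first is controlled by \eqref{ineq_g} and the $u^*_{rr}$ bound. For the first term you need the pointwise estimate $\lambda^* f'(u^*(r))\leq M_{N,p}|u^*_r(r)|^{p-2}/r^2$, i.e.\ inequality \eqref{ineq_dg}; the observation that $f'(u^*)u^*_r$ has a definite sign does not produce any quantitative bound on $f'(u^*)$, and no amount of differentiating the ODE will, since $f'$ never enters the equation itself. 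The paper obtains \eqref{ineq_dg} by a second, separate use of semi-stability: one first shows from \eqref{cota_u_rr} that $r^{\alpha}|u^*_r|^{p-2}$ is nondecreasing for $\alpha=|p-2|(2N-1)/(p-1)$, then tests the stability inequality with $\xi$ supported in $(0,r)$, uses the monotonicity of $f'(u^*(s))$ in $s$ (here convexity enters) to pull $f'(u^*(r))$ out of the left-hand integral, pulls $r^{\alpha}|u^*_r(r)|^{p-2}$ out of the right-hand integral, and scales $\xi(s)=\zeta(s/r)$ to extract the factor $r^{-2}$. This is the step your proposal is missing, and without it the claimed exponent for $u^*_{rrr}$ cannot be reached.
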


\begin{remark} In \cite{MR1266373} Garc\'{\i}a-Azorero, Peral and Puel proved that if $f(u)=e^u$ and $N=p+4p/(p-1)$ then

$$u^*(r)=-p\log r\mbox{    and    }\lambda^\ast=4p^p/(p-1).$$

This shows that the pointwise estimates of Theorem \ref{th_ext} are optimal for $N=p+4p/(p-1)$.

On the other hand, in \cite{MR2476421} Cabr\'e and Sanch\'on proved that if $N>p+4p/(p-1)$ and $f(u)=(1+u)^m$, where

$$m:=\frac{(p-1)N-2\sqrt{(p-1)(N-1)}-p+2}{N-2\sqrt{\frac{N-1}{p-1}}-p-2},$$

\noindent then

$$u^*(r)=r^{{-\frac{1}{p}\left(N-2\sqrt{\frac{N-1}{p-1}}-p-2\right)}}-1$$

\noindent and

$$\lambda^*=\left( \frac{p}{m-(p-1)}\right)^{p-1}\left( N-\frac{mp}{m-(p-1)}\right).$$

This also shows the optimality of the pointwise estimates of Theorem \ref{th_ext} for the case $N>p+4p/(p-1)$.
\end{remark}

As mentioned before, the proof of Theorem \ref{th_ext} is based on general properties of semi-stable radially decreasing energy solutions. Our main results about these type of solutions are the following.

\begin{theorem}
Let $p>1$, $g:\re\longrightarrow\re$ be a locally Lipschitz function, and $u\in\sob{B_1}$ be a semi-stable radial solution of \eqref{mainequation} satisfying $u_r(r)<0$ for all $r\in (0,1)$. Then there exists a constant $C_{N,p}$ depending only on $N$ and $p$ such that:
\begin{itemize}

\item[i)] If $p\leq N<p+4p/(p-1)$, then $\norm{u}_{\leb{\infty}{B_1}}\leq C_{N,p}\norm{u}_{\sob{B_1\setminus \overline{B_{1/2}}}}.$

\

\item[ii)] If $N=p+4p/(p-1)$, then \[\vert u(r)\vert \leq C_{p+4p/(p-1),p}\norm{u}_{\sob{B_1\setminus \overline{B_{1/2}}}}\left (\vert \log r\vert +1\right ),\;\forall r\in (0,1].\]

\

\item[iii)] If $N>p+4p/(p-1)$, then \[\vert u(r)\vert \leq C_{N,p}\norm{u}_{\sob{B_1\setminus \overline{B_{1/2}}}}r^{{-\frac{1}{p}\left(N-2\sqrt{\frac{N-1}{p-1}}-p-2\right)}},\;\forall r\in (0,1].\]

\end{itemize}
\label{th_1}
\end{theorem}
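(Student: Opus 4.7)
The plan is to reduce the stability condition to a one-dimensional weighted inequality on $(0,1)$ and combine it with the differentiated radial equation to derive a sharp Hardy-type bound for $u_r$; the pointwise estimates on $u$ then follow by integration against the resulting weight. Restricting the stability condition to radial $\eta\in C_c^1(0,1)$ gives
\[
\int_0^1 r^{N-1}\,(p-1)\vert u_r\vert^{p-2}\eta_r^{\,2}\, dr\;\geq\;\int_0^1 r^{N-1}\,g'(u)\,\eta^{2}\, dr.
\]

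The first step is to eliminate $g'(u)$ using the equation itself. Differentiating the radial identity $(r^{N-1}\vert u_r\vert^{p-2}u_r)_r=-r^{N-1}g(u)$ yields an explicit formula for $g'(u)u_r$ in terms of derivatives of $\vert u_r\vert^{p-2}u_r$. Substituting a test function of the form $\xi=\eta(r)\,\vert u_r\vert^{(p-2)/2}u_r$ in the stability inequality and integrating by parts using this formula, the $g'(u)$-term is absorbed into a coercive quadratic form in $\eta$, producing an inequality of the shape
\[
\int_0^1 r^{N-1}\vert u_r\vert^{p}\Bigl[A(p)\,\eta_r^{\,2}+\frac{B(N,p)}{r^{2}}\eta^{2}\Bigr]dr\;\geq\;0
\]
for explicit constants $A,B$ depending only on $p$ and $N$. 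The sign of $B(N,p)$ changes precisely when $N$ crosses the critical dimension $p+4p/(p-1)$, which accounts for the three regimes of the statement.

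The second step is a Hardy-type optimization. Plugging in $\eta(r)=r^{-\alpha}\chi_\delta(r)$, with $\chi_\delta$ a cutoff supported in $(\delta,1)$ and equal to $1$ on $[2\delta,1/2]$ and smoothly matched to $0$ on $[1/2,1]$, and choosing the exponent $\alpha^{*}=\alpha^{*}(N,p)$ that saturates the resulting quadratic in $\alpha$, produces a weighted integrability bound
\[
\int_0^1 r^{N-1-p\alpha^{*}}\vert u_r(r)\vert^{p}\, dr\;\leq\;C_{N,p}\norm{u}_{\sob{B_1\setminus\overline{B_{1/2}}}}^{p},
\]
in which $\alpha^{*}$ is exactly the exponent producing the sharp power $\tfrac{1}{p}\!\left(N-2\sqrt{(N-1)/(p-1)}-p-2\right)$ of the statement. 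Integrating $u_r$ from $r$ to $1$ and applying H\"older's inequality against the weight $s^{N-1-p\alpha^{*}}$ then gives
\[
\vert u(r)\vert\;\leq\;\vert u(1)\vert+\biggl(\int_r^{1}\!s^{N-1-p\alpha^{*}}\vert u_s\vert^{p}\,ds\biggr)^{\!1/p}\!\biggl(\int_r^{1}\!s^{-\frac{N-1-p\alpha^{*}}{p-1}}\,ds\biggr)^{\!\frac{p-1}{p}},
\]
and evaluating the second factor in each regime produces the bounded, logarithmic, and algebraic alternatives in (i), (ii), (iii), with $\vert u(1)\vert$ controlled by the one-dimensional Morrey embedding on the annulus $\{1/2<r<1\}$.

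The main obstacle will be the algebra of the first step. The substitution $\xi=\eta\vert u_r\vert^{(p-2)/2}u_r$ introduces $\vert u_r\vert^{p-2}$ weights intertwined with derivatives of $u_r$ in the quadratic form, and only after carefully exploiting the differentiated equation---which supplies exactly $g'(u)u_r$---do the awkward $u_{rr}$ contributions cancel against the $g'(u)\eta^{2}$ term, leaving a clean coercive inequality. For $p=2$ this reduces to the classical test function $\xi=\eta u_r$ used by the second author in \cite{MR2885956}, but for general $p>1$ the half-power $\vert u_r\vert^{(p-2)/2}$ is dictated by the natural linearization of the $p$-Laplacian and requires careful book-keeping. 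Once the first step is secured, the Hardy optimization and the pointwise recovery proceed by routine manipulations, and the trichotomy in the theorem corresponds to the sign of the weight exponent $N-1-p\alpha^{*}$.
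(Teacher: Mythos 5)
There is a genuine gap at the heart of your second step. The ``weighted integrability bound''
\[
\int_0^1 r^{N-1-p\alpha^{*}}\vert u_r(r)\vert^{p}\,dr\;\leq\;C_{N,p}\Vert u\Vert^{p}_{W^{1,p}(B_1\setminus\overline{B_{1/2}})}
\]
with $\alpha^{*}$ chosen to \emph{saturate} the quadratic form is false in general. At saturation the coefficient $A(p)\alpha^2+B(N,p)$ vanishes, so the stability inequality gives no coercivity there: for $\alpha<\alpha^{*}$ you get a constant of order $(\alpha^{*2}-\alpha^{2})^{-1}$, which blows up as $\alpha\to\alpha^{*}$. Concretely, for the model solution $u(r)=r^{-\frac{1}{p}(N-2\sqrt{(N-1)/(p-1)}-p-2)}-1$ (which is an admissible semi-stable solution when $N>p+4p/(p-1)$, as recalled in Remark 1.3), the integrand with the critical weight behaves like $r^{-1}$ near the origin and the integral diverges logarithmically. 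What stability actually yields is only the local Morrey-type bound $\int_0^r\vert u_r(t)\vert^p t^{N-1}\,dt\leq C r^{2\sqrt{(N-1)/(p-1)}+2}$ (Lemma 2.1 of the paper, obtained with a test function equal to the constant $r^{-\sqrt{(N-1)/(p-1)}-1}$ on $[0,r]$ and to $t^{-\sqrt{(N-1)/(p-1)}-1}$ on $[r,1/2]$), and from that local bound your critical weighted integral over $(r,1)$ can only be controlled by $C(1+\vert\log r\vert)$.

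This defect propagates into your final step: the first H\"older factor $\bigl(\int_r^1 s^{N-1-p\alpha^{*}}\vert u_s\vert^p\,ds\bigr)^{1/p}$ is then of order $\vert\log r\vert^{1/p}$, so a single application of H\"older over the whole interval $(r,1)$ reproves exactly the old estimate of Theorem 1.1(iii), with the spurious factor $\vert\log r\vert^{1/p}$ that this theorem is designed to remove. The paper's way around this is the one ingredient your proposal is missing: apply H\"older only on the single dyadic annulus $[r/2,r]$ (where the local Morrey bound loses nothing), obtaining $\vert u(r)-u(r/2)\vert\leq K'_{N,p}\Vert\D u\Vert_{L^p(B_1\setminus B_{1/2})}\,r^{-\frac{1}{p}(N-2\sqrt{(N-1)/(p-1)}-p-2)}$ with no logarithm (Proposition 2.2), and then telescope $u(r)-u(r_1)=\sum_i\bigl(u(r_1/2^{i-1})-u(r_1/2^{i})\bigr)$. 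The resulting geometric series converges when the exponent is positive (case i), sums to $O(\vert\log r\vert)$ when it vanishes (case ii), and is comparable to its largest term when it is negative (case iii); this dyadic summation, not the sign of $B(N,p)$ (which equals $-(N-1)<0$ for all $N$), is what produces the trichotomy. Your first step (deriving the weighted stability inequality $(N-1)\int\vert u_r\vert^p\eta^2\leq(p-1)\int\vert u_r\vert^p\vert\D(\vert x\vert\eta)\vert^2$) is essentially the content of the cited Lemma 2.2 of Cabr\'e--Capella--Sanch\'on and is fine, as is the Morrey control of $\vert u(1)\vert$ on the outer annulus; but without the dyadic decomposition the argument cannot reach the sharp estimates.
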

\begin{theorem}
Let $N\geq p+4p/(p-1)$, $g:\re\longrightarrow\re$ be a locally Lipschitz function, and $u\in\sob{B_1}$ be a semi-stable radial solution of \eqref{mainequation} satisfying $u_r(r)<0$ for all $r\in (0,1)$. Then there exists a constant $C'_{N,p}$ depending only on $N$ and $p$ such that:
\begin{itemize}

\item[i)] If $g\geq 0$, then
\[
\vert u_r(r)\vert \leq C'_{N,p}\norm{\D u}_{\leb{p}{B_1\setminus B_{1/2}}}r^{{-\frac{1}{p}\left(N-2\sqrt{\frac{N-1}{p-1}}-2\right)}},\;\forall r\in (0,1/2].
\]

\

\item[ii)] If $g\geq 0$ is nondecreasing, then
\[
\vert u_{rr}(r)\vert \leq C'_{N,p}\norm{\D u}_{\leb{p}{B_1\setminus B_{1/2}}}r^{{-\frac{1}{p}\left(N-2\sqrt{\frac{N-1}{p-1}}+p-2\right)}},\;\forall r\in (0,1/2].
\]

\

\item[iii)] If $g\geq 0$ is nondecreasing and convex, then
\[
\vert u_{rrr}(r)\vert \leq C'_{N,p}\norm{\D u}_{\leb{p}{B_1\setminus B_{1/2}}}r^{{-\frac{1}{p}\left(N-2\sqrt{\frac{N-1}{p-1}}+2p-2\right)}},\;\forall r\in (0,1/2].
\]

\end{itemize}
\label{th_2}
\end{theorem}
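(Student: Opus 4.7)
The plan is to convert the radial equation into a first-order ODE for $v:=-u_r>0$ and bootstrap from the pointwise bound on $u$ supplied by Theorem \ref{th_1}. The radial form
\[
(r^{N-1}v^{p-1})_r=r^{N-1}g(u)
\]
makes $H(r):=r^{N-1}v(r)^{p-1}$ nondecreasing whenever $g\ge0$. Write $\alpha_0,\alpha_1,\alpha_2,\alpha_3$ for the exponents in Theorem \ref{th_1}(iii) and parts (i)--(iii) here; they satisfy $\alpha_k=\alpha_{k-1}+1$, the arithmetic that makes the bootstrap work. Since hypotheses and conclusions are invariant under $u\mapsto u-u(1)$ (replacing $g(t)$ by $g(t+u(1))$ preserves semi-stability, nonnegativity, monotonicity and convexity of $g$), I assume $u(1)=0$; an elementary H\"older/Poincar\'e argument on the annulus then gives $\norm{u}_{\sob{B_1\setminus\overline{B_{1/2}}}}\le C\,\norm{\D u}_{\leb{p}{B_1\setminus\overline{B_{1/2}}}}$, so Theorem \ref{th_1} furnishes bounds on $u$ in terms of the latter norm.

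For (i), monotonicity of $H$ on $[r,2r]$ gives $v(s)\ge 2^{-(N-1)/(p-1)}v(r)$, hence
\[
u(r)-u(2r)=\int_r^{2r}v(s)\,ds\ge c_{N,p}\,r\,v(r).
\]
Controlling $u(r)-u(2r)$ by Theorem \ref{th_1}(iii) at $r$ and $2r$ and using $\alpha_1=\alpha_0+1$ yields the claim. For (ii), rewrite the equation algebraically,
\[
v'=\frac{g(u)}{(p-1)\,v^{p-2}}-\frac{N-1}{p-1}\cdot\frac{v}{r}.
\]
Since $g\circ u$ is nonincreasing in $r$ ($g$ nondecreasing, $u$ decreasing), integrating $H'(s)=s^{N-1}g(u(s))$ over $(0,r)$ gives $H(r)\ge r^N g(u(r))/N$, i.e.\ $g(u(r))\le N\,v(r)^{p-1}/r$. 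Substituting, both terms of $v'$ are bounded by $C\,v/r\le C\,r^{-\alpha_2}$.

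For (iii), differentiating the formula for $v'$ produces
\[
v''=-\frac{g'(u)\,v^{3-p}}{p-1}-\frac{(p-2)\,g(u)\,v'}{(p-1)\,v^{p-1}}-\frac{(N-1)\,v'}{(p-1)\,r}+\frac{(N-1)\,v}{(p-1)\,r^2}.
\]
The last three terms are each $\le C\,r^{-\alpha_3}$ by (i), (ii) and $g(u)\le C\,v^{p-1}/r$. For the delicate first term, convexity of $g$ applied at $u(r/2)>u(r)$ yields
\[
g'(u(r))\,\bigl(u(r/2)-u(r)\bigr)\le g(u(r/2))\le C\,v(r/2)^{p-1}/r,
\]
while monotonicity of $H$ on $[r/2,r]$ gives $u(r/2)-u(r)\ge c\,r\,v(r/2)$, so $g'(u(r))\le C\,v(r/2)^{p-2}/r^2$. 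To collapse $v(r/2)^{p-2}$ into $v(r)^{p-2}$ irrespective of the sign of $p-2$, I establish a two-sided doubling estimate $v(r/2)\sim v(r)$: the upper bound $v(r/2)\le C\,v(r)$ is immediate from $H$ nondecreasing, and for the reverse I integrate $H'$ over $[r/2,r]$ using $g(u(s))\le g(u(r/2))\le C\,v(r/2)^{p-1}/r$ to conclude $H(r)\le(1+2^N)H(r/2)$, i.e.\ $v(r)\le C''\,v(r/2)$. Together, $g'(u(r))\,v(r)^{3-p}\le C\,v(r)/r^2\le C\,r^{-\alpha_3}$.

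The chief obstacle is part (iii): the convexity estimate on $g'$ naturally produces a factor $v(r/2)^{p-2}$, whose conversion to $v(r)^{p-2}$ forces the two-sided doubling for $v$, which in turn relies on the bound $g(u)\le C\,v^{p-1}/r$ from (ii). Everything else is algebraic manipulation of the radial ODE together with the monotonicity of $H$.
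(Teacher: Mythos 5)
Your overall bootstrap strategy is sound and close to the paper's for parts (ii) and (iii), but part (i) has a genuine gap at the borderline dimension $N=p+4p/(p-1)$. You bound $|u_r(r)|\leq C\left(u(r)-u(2r)\right)/r$ and then control $u(r)-u(2r)$ by invoking Theorem \ref{th_1}(iii) at $r$ and $2r$; that item applies only when $N>p+4p/(p-1)$. At equality the relevant bound is Theorem \ref{th_1}(ii), which carries a factor $\vert\log r\vert+1$, so your argument only yields $\vert u_r(r)\vert\leq C\left(\vert\log r\vert+1\right)r^{-\alpha_1}$ there, strictly weaker than the claim -- and since (ii) and (iii) are bootstrapped from (i), the spurious logarithm contaminates all three estimates. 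This is exactly the loss the paper is designed to eliminate. The repair is to control the \emph{single dyadic increment} rather than $u$ itself: the paper's Proposition \ref{prop_1} gives $\vert u(2r)-u(r)\vert\leq K'_{N,p}\norm{\D u}_{\leb{p}{B_1\setminus B_{1/2}}}(2r)^{-\frac{1}{p}\left(N-2\sqrt{\frac{N-1}{p-1}}-p-2\right)}$ with no logarithm for every $N\geq p$, and with that your argument closes in all cases. (The paper itself avoids the issue differently: it bounds $r^N\vert u_r(r)\vert^p$ above by $\int_r^{2r}\vert u_r\vert^p t^{N-1}\,dt$ using the monotonicity of $\left(-r^{N-1}\vert u_r\vert^{p-2}u_r\right)^{p/(p-1)}$ and then applies Lemma \ref{Lemma_1} directly.)

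Apart from this, your treatment of (ii) coincides with the paper's (both rest on $g(u(r))\leq N\vert u_r\vert^{p-1}/r$ and the resulting bound $\vert u_{rr}\vert\leq\frac{2N-1}{p-1}\frac{\vert u_r\vert}{r}$), and your part (iii) takes a genuinely different and attractive route to the key inequality $g'(u(r))\leq C\vert u_r(r)\vert^{p-2}/r^2$. The paper (Lemma \ref{lemma23}) obtains it from the semi-stability inequality with a test function supported in $(0,r)$, after first showing that $r^\alpha\vert u_r\vert^{p-2}$ is monotone for $\alpha=\vert p-2\vert(2N-1)/(p-1)$. You instead use only convexity of $g$ at the pair $u(r)<u(r/2)$ together with a two-sided doubling estimate $v(r/2)\sim v(r)$, both of which follow from the monotonicity of $r^{N-1}v^{p-1}$ and the bound $g(u)\leq Nv^{p-1}/r$; I checked the doubling computation ($H(r)\leq(1+2^N)H(r/2)$) and it is correct. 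Your argument is more elementary in that it does not re-invoke semi-stability at this stage, at the cost of requiring convexity of $g$ slightly more globally (along the segment from $u(r)$ to $u(r/2)$ rather than just $g''\geq 0$ infinitesimally); for part (iii) as stated this is harmless. Once part (i) is repaired as above, the rest of your proof is correct.
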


\begin{remark}
We emphasize that the estimates obtained in
Theorems \ref{th_1} and \ref{th_2} are in terms of the
$W^{1,p}$ norm of the annulus $B_1\setminus \overline{B_{1/2}}$, while
$u$ is required to belong to $W^{1,p}(B_1)$. In fact, this requirement
is essential to obtain our results, since we can easily find
semi-stable radially decreasing solutions of \eqref{mainequation} (for instance $u(r)=r^{-s}-1$, with $s$ large enough), not in the energy class $W^{1,p}(B_1)$, for which the statements of Theorems
\ref{th_1} and \ref{th_2} fail to satisfy.
\end{remark}
\begin{remark}
To our knowledge there is no estimates of $\vert u_{rr}\vert$ or $\vert u_{rrr}\vert$ in the literature for this kind of solutions. Moreover, we prove that without assumptions on the sign of $g$, $g'$ or $g''$ it
is not possible to obtain any pointwise estimate for $\vert
u_r\vert$, $\vert u_{rr}\vert$ or $\vert u_{rrr}\vert$ (see
Corollaries \ref{nohay}, \ref{nohayy} and \ref{nohayyy}).
\end{remark}

\section{Proof of the main results}

\begin{lemma}
Let $N\geq p>1$, $g:\re\longrightarrow\re$ be a locally Lipschitz function, and $u\in\sob{B_1}$ be a semi-stable radial solution of \eqref{mainequation} satisfying $u_r(r)<0$ for all $r\in (0,1)$. Then there exists a constant $K_{N,p}$ depending only on $N$ and $p$ such that:
\begin{equation}
\int_0^r{\vert u_r(t)\vert^p t^{N-1}\,dt}\leq K_{N,p}\norm{\D u}^p_{\leb{p}{B_1\setminus B_{1/2}}}r^{2\sqrt{\frac{N-1}{p-1}}+2},\;\forall r\in [0,1].
\label{des_1}
\end{equation}
\label{Lemma_1}
\end{lemma}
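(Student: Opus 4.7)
The plan is to combine semi-stability with the radial form of $-\Delta_p u = g(u)$ to derive a weighted one-dimensional Hardy-type inequality with sharp constant $(N-1)/(p-1)$, and then to apply it with a carefully chosen family of test functions. We work in radial coordinates and use that $u\in C^2(0,1)$ (by interior regularity for the $p$-Laplacian, together with $u_r\neq 0$) to justify every integration by parts.

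\medskip

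\textbf{Step 1 (the Hardy-type inequality).} Since $u_r<0$, the equation reads $(t^{N-1}|u_r|^{p-1})_t = t^{N-1}g(u)$, so differentiating once yields an explicit formula for $t^{N-1}g'(u)u_r$ in terms of $|u_r|^{p-1}$, $u_{rr}$ and $u_{rrr}$. Substituting the test function $\xi(t)=\eta(t)|u_r(t)|$ ($\eta\in C_c^1(0,1)$) into the radial semi-stability inequality, expanding $\xi_r = \eta'|u_r|-\eta\,u_{rr}$, replacing $g'(u)u_r$ by the above expression, and integrating the $u_{rrr}$-term once by parts, a series of pairwise cancellations takes place: all contributions involving $\eta^2|u_r|^{p-2}u_{rr}^2$, $\eta\eta'|u_r|^{p-1}u_{rr}$ and $r^{-1}\eta^2|u_r|^{p-1}u_{rr}$ disappear, leaving the clean weighted Hardy inequality
\[
(p-1)\int_0^1 (\eta')^2\,|u_r|^p\, t^{N-1}\,dt \;\geq\; (N-1)\int_0^1 \eta^2\,|u_r|^p\, t^{N-3}\,dt,\qquad\forall\,\eta\in C_c^1(0,1).
\]

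\medskip

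\textbf{Step 2 (from the Hardy inequality to the pointwise bound).} Set $\alpha_\ast:=\sqrt{(N-1)/(p-1)}$; power solutions $t^{\alpha_\ast}$ saturate the Hardy inequality to leading order. For each $r\in(0,1]$, test the inequality above against a function $\eta=\eta_r\in C_c^1(0,1)$ designed so that (i) $\eta(t)\sim t^{\alpha}$ with $\alpha$ strictly below $\alpha_\ast$ on a suitable interval, leaving a strictly positive residual coefficient $(N-1)-(p-1)\alpha^2$ on the leading term; (ii) $\eta$ is essentially a plateau around $[0,r]$, so that the right-hand side controls a multiple of $r^{-2}\int_0^r|u_r|^p\,t^{N-1}\,dt$ from below; and (iii) the support of $\eta$ is contained in $(0,1)$ and $\eta$ decays linearly to zero on $[1/2,1]$, so that its contribution to the left-hand side is bounded by a constant multiple of $\norm{\D u}^p_{\leb{p}{B_1\setminus B_{1/2}}}$. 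Rearranging the resulting inequality and either optimizing $\alpha$ in $r$ or iterating the estimate across a dyadic sequence of scales produces exactly \eqref{des_1}, with a constant $K_{N,p}$ depending only on $N$ and $p$.

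\medskip

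\textbf{Main obstacle.} The delicate part is Step 2. The most natural choice $\eta(t)=(t/r)^{\alpha_\ast}$ with a linear cutoff leads to an \emph{exact cancellation} of the leading terms on $[0,r]$, since $(p-1)\alpha_\ast^2=N-1$, and conveys no information. Breaking this degeneracy is the heart of the argument: one must either work off-critical with $\alpha<\alpha_\ast$, carefully balancing the diverging factor $((N-1)-(p-1)\alpha^2)^{-1}$ against the exponent gap $r^{2(\alpha_\ast-\alpha)}$, or iterate the Hardy inequality across dyadic annuli and sum the resulting geometric series. The hypothesis $u\in\sob{B_1}$ is essential throughout: it selects the slowly-growing branch of the ODE associated to the Hardy inequality and forces the sharp exponent $2+2\alpha_\ast$. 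Without this integrability the estimate would fail, as shown by the counterexamples $u(r)=r^{-s}-1$ noted in the Remark following Theorem \ref{th_2}.
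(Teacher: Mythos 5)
Your Step 1 is fine: the inequality
\[
(p-1)\int_0^1 \eta'^2\,\vert u_r\vert^p t^{N-1}\,dt\;\geq\;(N-1)\int_0^1 \eta^2\,\vert u_r\vert^p t^{N-3}\,dt
\]
is exactly the form of semi-stability used in the paper (it is \cite[Lem.~2.2]{MR2476421}, quoted as \eqref{des_2} after the substitution $\eta\mapsto t\eta$), so deriving it by plugging $\xi=\eta\vert u_r\vert$ into the stability form is the right move. The genuine gap is in Step 2. You have misdiagnosed the degeneracy: the exact cancellation at the critical exponent $\alpha_\ast=\sqrt{(N-1)/(p-1)}$ is not the obstacle to be ``broken'' --- it is the feature to be exploited, but on the \emph{intermediate annulus} $[r,1/2]$, not on $[0,r]$. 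The paper's test function is constant (equal to $r^{-\alpha_\ast}$, in your normalization) on $[0,r]$, equal to the critical power $t^{-\alpha_\ast}$ on $[r,1/2]$, and linear on $[1/2,1]$. With this choice the integrand $(N-1)\eta^2t^{N-3}-(p-1)\eta'^2t^{N-1}$ vanishes identically on $[r,1/2]$ (harmless: that region simply drops out of the inequality), while on the plateau $\eta'=0$ and the integrand equals $(N-1)r^{-2\alpha_\ast}t^{N-3}\geq (N-1)r^{-2\alpha_\ast-2}t^{N-1}$, which is what produces the sharp weight $r^{-2\alpha_\ast-2}$ in front of $\int_0^r\vert u_r\vert^pt^{N-1}\,dt$ in one shot. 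Neither of your proposed remedies reaches this endpoint: with a fixed subcritical power you can only match the plateau height $r^{-\alpha}$ (or pay a factor $r^{-2(\alpha_\ast-\alpha)}$ in the cutoff region), so you obtain the exponent $2\alpha+2<2\alpha_\ast+2$; letting $\alpha\to\alpha_\ast$ to recover the endpoint kills the residual coefficient $(N-1)-(p-1)\alpha^2$ you were relying on; and a dyadic iteration with a fixed subcritical loss per scale compounds rather than repairs the deficit. So as written the plan proves a strictly weaker estimate than \eqref{des_1}.

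Two smaller points. First, your plateau touches the origin while your Hardy inequality is stated for $\eta\in C_c^1(0,1)$; you need the extension to Lipschitz $\eta$ not vanishing at $0$ (this is where $u\in\sob{B_1}$ enters, via the approximation in \cite[Lem.~2.2]{MR2476421}; the paper's own proof handles the analogous issue in the case $N=p$ by a profile $\sim 1/t$ near the origin and a limit $r_0\to 0$). Second, in the paper's normalization the plateau region yields the coefficient $N-p$ rather than $N-1$, which is why the hypothesis $N\geq p$ appears and why the case $N=p$ requires the separate argument just mentioned; any correct execution of your plan must account for one of these two phenomena.
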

\begin{proof}
Let us use \cite[Lem. 2.2]{MR2476421} (see also the proof of \cite[Lem. 2.3]{MR2476421}) to assure that
\begin{equation}
(N-1)\int_{B_1}{\vert u_r\vert^p\eta^2\,dx}\leq (p-1)\int_{B_1}{\vert u_r\vert^p\vert \D(\vert x\vert\eta)\vert^2\,dx},
\label{des_2}
\end{equation}
for every radial Lipschitz function $\eta$ vanishing on $\partial B_1$.

We now fix $r\in (0,1/2)$ and consider the function
\[
\eta(t)=\left\lbrace \begin{array}{ll}
r^{-\sqrt{\frac{N-1}{p-1}}-1}&\text{, if }0\leq t\leq r,\\
t^{-\sqrt{\frac{N-1}{p-1}}-1}&\text{, if }r< t\leq 1/2,\\
2^{\sqrt{\frac{N-1}{p-1}}+2}(1-t)&\text{, if }1/2< t\leq 1.
\end{array}\right.
\]
Let $v(t)=(N-1)\eta(t)^2-(p-1)(t\eta(t))'^2$. Since $v(t)=0$ for $r<t\leq 1/2$, inequality \eqref{des_2} shows that
\begin{eqnarray*}
(N-p)r^{-2\sqrt{\frac{N-1}{p-1}}-2}\int_0^r{\vert u_r(t)\vert^pt^{N-1}\,dt}&\leq& -\int_{1/2}^1{v(t)\vert u_r(t)\vert^pt^{N-1}\,dt}\\
&\leq&\alpha_{N,p}\int_{1/2}^1{\vert u_r(t)\vert^pt^{N-1}\,dt},
\end{eqnarray*}
where the constant $\alpha_{N,p}=\underset{1/2\leq t\leq 1}{\max}{-v(t)}$ dependes only on $N$ and $p$. This establishes \eqref{des_1} for $r\in [0,1/2]$, if $N>p$.

If $r\in (1/2,1]$ and $N>p$ then, applying the above inequality for $r=1/2$, we obtain
\begin{eqnarray*}
\int_0^r{\vert u_r(t)\vert^pt^{N-1}\,dt}\leq\int_0^{1/2}{\vert u_r(t)\vert^pt^{N-1}\,dt}+\int_{1/2}^1{\vert u_r(t)\vert^pt^{N-1}\,dt}\\
\leq \left [\frac{\alpha_{N,p}}{N-p}\left (\frac{1}{2}\right )^{2\sqrt{\frac{N-1}{p-1}}+2}+1\right ]\int_{1/2}^1{\vert u_r(t)\vert^pt^{N-1}\,dt}\\
\leq (2r)^{2\sqrt{\frac{N-1}{p-1}}+2}\left [\frac{\alpha_{N,p}}{N-p}\left (\frac{1}{2}\right )^{2\sqrt{\frac{N-1}{p-1}}+2}+1\right ]\int_{1/2}^1{\vert u_r(t)\vert^pt^{N-1}\,dt},
\end{eqnarray*}
which is the desired conclusion with $K_{N,p}=\frac{1}{\omega_N}\left (\frac{\alpha_{N,p}}{N-p}+2^{2\sqrt{\frac{N-1}{p-1}}+2}\right )$.

Finally, if $N = p$, changing the definition of $\eta(t)$ in $[0,r]$ by
\[
\eta(t)=\left\lbrace \begin{array}{ll}
\frac{r^{-\sqrt{\frac{N-1}{p-1}}}}{r_0}&\text{, if }0\leq t\leq r_0,\\
\\
\frac{r^{-\sqrt{\frac{N-1}{p-1}}}}{t}&\text{, if }r_0< t\leq r,
\end{array}\right.
\]
for arbitrary $r_0\in (0,r)$, we obtain
\[
(N-1)r^{-2\sqrt{\frac{N-1}{p-1}}-2}\int_{r_0}^r{\left (\frac{r}{t}\right )^2\vert u_r(t)\vert^pt^{N-1}\,dt}\leq C_{N,p}\int_{1/2}^1{\vert u_r(t)\vert^pt^{N-1}\,dt}.
\]
Letting $r_0\rightarrow 0$ and taking into account that $r/t\geq 1$ for $0<t\leq r$ yields \eqref{des_1} for $N=p$ and $r\in [0,1/2]$. If $r\in (1/2,1]$, we apply similar arguments to the case $N>p$ to complete the proof.
\end{proof}
\begin{proposition}
Let $N\geq p>1$, $g:\re\longrightarrow\re$ be a locally Lipschitz function, and $u\in\sob{B_1}$ be a semi-stable radial solution of \eqref{mainequation} satisfying $u_r(r)<0$ for all $r\in (0,1)$. Then there exists a constant $K'_{N,p}$ depending only on $N$ and $p$ such that:
\begin{equation}
\left\vert u(r)-u\left (\frac{r}{2}\right )\right\vert\leq K'_{N,p}\norm{\D u}_{\leb{p}{B_1\setminus B_{1/2}}}r^{{-\frac{1}{p}\left(N-2\sqrt{\frac{N-1}{p-1}}-p-2\right)}},\;\forall r\in (0,1].
\label{des_3}
\end{equation}
\label{prop_1}
\end{proposition}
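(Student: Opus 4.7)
The plan is to bound $|u(r) - u(r/2)|$ by $\int_{r/2}^r |u_r(t)|\,dt$ and apply Hölder's inequality with a weight chosen so that the resulting weighted $L^p$ integral is exactly the one controlled by Lemma \ref{Lemma_1}.

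Concretely, I would first write
\[
\left|u(r)-u\!\left(\tfrac{r}{2}\right)\right| \leq \int_{r/2}^{r} |u_r(t)|\,t^{(N-1)/p}\cdot t^{-(N-1)/p}\,dt
\]
and apply Hölder with conjugate exponents $p$ and $p/(p-1)$ to split this as a product of two factors. The first factor becomes $\left(\int_{r/2}^r |u_r(t)|^p t^{N-1}\,dt\right)^{1/p}$, which by Lemma \ref{Lemma_1} (applied at $r$, or at $\min(r,1)$, and using monotonicity of the left-hand side) is bounded by $K_{N,p}^{1/p}\,\norm{\D u}_{L^p(B_1\setminus B_{1/2})}\,r^{(2\sqrt{(N-1)/(p-1)}+2)/p}$.

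The second factor is the purely geometric integral $\left(\int_{r/2}^r t^{-(N-1)/(p-1)}\,dt\right)^{(p-1)/p}$. When $N>p$, one computes directly
\[
\int_{r/2}^r t^{-(N-1)/(p-1)}\,dt = \frac{r^{1-(N-1)/(p-1)}}{(N-1)/(p-1)-1}\bigl(2^{(N-1)/(p-1)-1}-1\bigr),
\]
so this factor equals a constant (depending only on $N,p$) times $r^{(p-N)/p}$. When $N=p$, the integral is simply $\log 2$, a constant; one checks that the exponent of $r$ on the right-hand side of \eqref{des_3} then reduces to $4/p$, which is exactly what the first factor supplies, so both cases collapse into the same formula.

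Multiplying the two factors, the total exponent of $r$ is
\[
\frac{1}{p}\Bigl(2\sqrt{\tfrac{N-1}{p-1}}+2\Bigr)+\frac{p-N}{p} = -\frac{1}{p}\!\left(N-2\sqrt{\tfrac{N-1}{p-1}}-p-2\right),
\]
matching the claim with $K'_{N,p}$ built from $K_{N,p}^{1/p}$ and the constant from the geometric integral. The only point requiring a bit of care is the case $N=p$ (where the dual exponent $(N-1)/(p-1)$ equals $1$ and the integral is logarithmic in shape), but since $\log 2$ is bounded this merges with the generic case without trouble; no genuine obstacle is anticipated beyond routine bookkeeping of constants.
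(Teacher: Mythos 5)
Your proposal is correct and follows essentially the same route as the paper: write $\left\vert u(r)-u(r/2)\right\vert$ as $\int_{r/2}^{r}\vert u_r(t)\vert t^{(N-1)/p}\,t^{-(N-1)/p}\,dt$, apply H\"older with exponents $p$ and $p/(p-1)$, control the first factor by Lemma \ref{Lemma_1}, and evaluate the second (geometric) factor, which scales like $r^{(p-N)/p}$. The paper handles the borderline case $N=p$ by the same rescaling $\int_{r/2}^{r}t^{-(N-1)/(p-1)}\,dt=r^{1-(N-1)/(p-1)}\int_{1/2}^{1}s^{-(N-1)/(p-1)}\,ds$ rather than a separate logarithmic computation, but this is only a cosmetic difference.
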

\begin{proof}
Fix $r\in (0,1]$. Applying H\"{o}lder's inequality and Lemma \ref{Lemma_1} we deduce
\begin{eqnarray*}
\left \vert u(r)-u\left (\frac{r}{2}\right )\right \vert &=& \int_{r/2}^r{\vert u_r(t)\vert t^{\frac{N-1}{p}}\frac{1}{t^{\frac{N-1}{p}}}\,dt}\\
&\leq & \left (\int_{r/2}^r{\vert u_r(t)\vert^p t^{N-1}\,dt}\right )^{\frac{1}{p}}\left (\int_{r/2}^r{t^{-\frac{(N-1)p'}{p}}\,dt}\right )^{\frac{1}{p'}}\\
&\leq& K_{N,p}^{\frac{1}{p}}\norm{\D u}_{\leb{p}{B_1\setminus B_{1/2}}}r^{\frac{2}{p}\sqrt{\frac{N-1}{p-1}}+\frac{2}{p}}\left (r^{-\frac{(N-1)}{p-1}+1}\int_{1/2}^1{t^{-\frac{(N-1)}{p-1}}\,dt}\right )^{\frac{p-1}{p}},
\end{eqnarray*}
and \eqref{des_3} is proved.
\end{proof}
\begin{proof}[Proof of the Theorem \ref{th_1}.]
Let $0<r\leq 1$. Then, there exist $m\in\mathbb{N}$ and $1/2<r_1\leq 1$ such that $r=r_1/2^{m-1}$. Since $u$ is radial we have $u(r_1)\leq \Vert u\Vert_{\leb{\infty}{B_1\setminus \overline{B_{1/2}}}}\leq \gamma_{N,p}\Vert u\Vert_{\sob{B_1\setminus \overline{B_{1/2}}}}$, where $\gamma_{N,p}$ dependes only on $N$ and $p$. From this an Proposition \ref{prop_1}, it follows that
\begin{equation}
\begin{array}{rcl}
\vert u(r)\vert &\leq& \vert u(r)-u(r_1)\vert +\vert u(r_1)\vert\\
&=&\sum\limits_{i=1}^{m-1}{\left \vert u\left (\frac{r_1}{2^{i-1}}\right )-u\left(\frac{r_1}{2^i}\right )\right \vert }+\vert u(r_1)\vert\\
&\leq& K'_{N,p}\norm{\D u}_{\leb{p}{B_1\setminus B_{1/2}}}\sum\limits_{i=1}^{m-1}{\left (\frac{r_1}{2^{i-1}}\right )^{{-\frac{1}{p}\left(N-2\sqrt{\frac{N-1}{p-1}}-p-2\right)}}}\\&&+\gamma_{N,p}\Vert u\Vert_{\sob{B_1\setminus \overline{B_{1/2}}}}\\
&\leq& \left (K'_{N,p}\sum\limits_{i=1}^{m-1}{\left (\frac{r_1}{2^{i-1}}\right )^{{-\frac{1}{p}\left(N-2\sqrt{\frac{N-1}{p-1}}-p-2\right)}}}+\gamma_{N,p}\right )\Vert u\Vert_{\sob{B_1\setminus \overline{B_{1/2}}}}.\\
\end{array}
\label{des_4}
\end{equation}
\begin{itemize}
\item If $p\leq N<p+4p/(p-1)$, we have ${-\frac{1}{p}\left(N-2\sqrt{\frac{N-1}{p-1}}-p-2\right)}>0$. Then
\[
\sum\limits_{i=1}^{m-1}{\left (\frac{r_1}{2^{i-1}}\right )^{{-\frac{1}{p}\left(N-2\sqrt{\frac{N-1}{p-1}}-p-2\right)}}}\leq \sum\limits_{i=1}^{\infty}{\left (\frac{1}{2^{i-1}}\right )^{{-\frac{1}{p}\left(N-2\sqrt{\frac{N-1}{p-1}}-p-2\right)}}},
\]
which is a convergent series.
\item If $N=p+4p/(p-1)$, we have ${-\frac{1}{p}\left(N-2\sqrt{\frac{N-1}{p-1}}-p-2\right)}=0$. From \eqref{des_4} we obtain
\begin{eqnarray*}
\vert u(r)\vert&\leq& \left [K'_{N,p} (m-1)+\gamma_{N,p}\right ]\Vert u\Vert_{\sob{B_1\setminus \overline{B_{1/2}}}}\\
&=& \left [K'_{N,p} \left (\frac{\log{r_1}-\log{r}}{\log{2}}\right )+\gamma_{N,p}\right ]\Vert u\Vert_{\sob{B_1\setminus \overline{B_{1/2}}}}\\
&\leq& \left ( \frac{K'_{N,p}}{\log{2}}+\gamma_{N,p}\right )(\vert \log{r}\vert +1)\Vert u\Vert_{\sob{B_1\setminus \overline{B_{1/2}}}},\\
\end{eqnarray*}
\item If $N>p+4p/(p-1)$, we have ${-\frac{1}{p}\left(N-2\sqrt{\frac{N-1}{p-1}}-p-2\right)}<0$. Then
\[
\sum\limits_{i=1}^{m-1}{\left (\frac{r_1}{2^{i-1}}\right )^{{-\frac{1}{p}\left(N-2\sqrt{\frac{N-1}{p-1}}-p-2\right)}}}=\frac{r^{{-\frac{1}{p}\left(N-2\sqrt{\frac{N-1}{p-1}}-p-2\right)}}-r_1^{{-\frac{1}{p}\left(N-2\sqrt{\frac{N-1}{p-1}}-p-2\right)}}}{(1/2)^{{-\frac{1}{p}\left(N-2\sqrt{\frac{N-1}{p-1}}-p-2\right)}}-1}.
\]
From \eqref{des_4}, we conclude
\[
\vert u(r)\vert\leq\left ( \frac{K'_{N,p}}{(1/2)^{{-\frac{1}{p}\left(N-2\sqrt{\frac{N-1}{p-1}}-p-2\right)}}-1}+\gamma_{N,p}\right )r^{{-\frac{1}{p}\left(N-2\sqrt{\frac{N-1}{p-1}}-p-2\right)}}\Vert u\Vert_{\sob{B_1\setminus \overline{B_{1/2}}}},
\]
\end{itemize}
which completes the proof.
\end{proof}
\begin{lemma}\label{lemma23}
Let $N\geq 1$, $p>1$, $g:\re\longrightarrow\re$ be a nonnegative and nondecreasing locally Lipschitz function, and $u\in\sob{B_1}$ be a semi-stable radial solution of \eqref{mainequation} such that $u_r<0$ for all $r\in (0,1)$. Then
%
\begin{equation}
g(u(r))\leq N\frac{\vert u_r\vert^{p-1}}{r},\ \ \forall r\in (0,1].
\label{ineq_g}
\end{equation}
Moreover, if $g$ convex then
\begin{equation}
g'(u(r))\leq M_{N,p}\frac{\vert u_r(r)\vert^{p-2}}{r^2},\ \ \forall r\in (0,1],
\label{ineq_dg}
\end{equation}
where $M_{N,p}$ is a constant depending only on $N$ and $p$.

\label{lemma_g}
\end{lemma}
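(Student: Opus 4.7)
\medskip
\noindent\textbf{Proof proposal.} Both estimates come from the radial ODE structure, together with the semi-stability inequality for \eqref{ineq_dg}.

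For \eqref{ineq_g}, the plan is to integrate the radial $p$-Laplace equation directly. Since $u_r<0$, the equation $-\Delta_p u=g(u)$ takes the form
\[
\bigl(t^{N-1}|u_r(t)|^{p-1}\bigr)'=t^{N-1}\,g(u(t)),
\]
which I would integrate from $0$ to $r$ (justified because $u$ is $C^1$ on $B_1\setminus\{0\}$ by standard $p$-Laplace regularity, and the left side vanishes at $0$ since $u\in W^{1,p}$ together with $g\geq 0$ forces $t^{N-1}|u_r(t)|^{p-1}\to 0$). Since $u$ is strictly decreasing and $g$ is nondecreasing, one has $g(u(t))\geq g(u(r))$ for every $t\in(0,r]$; pulling this constant out and dividing by $r^{N-1}$ after evaluating $\int_0^r t^{N-1}\,dt=r^N/N$ gives \eqref{ineq_g}.

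For \eqref{ineq_dg}, I would exploit the semi-stability written in radial form,
\[
(p-1)\int_0^1 |u_r|^{p-2}\,\xi'(t)^2\,t^{N-1}\,dt \;\geq\; \int_0^1 g'(u)\,\xi(t)^2\,t^{N-1}\,dt,
\]
tested against a Lipschitz cutoff localized at scale $r$. Concretely, for $r\in(0,1/2]$, I would take $\xi\equiv 1$ on $[r/2,3r/4]$, vanishing linearly on $[r/4,r/2]$ and on $[3r/4,r]$, with $|\xi'|\leq C/r$. Convexity of $g$ gives $g'$ nondecreasing, so together with $u$ decreasing one has $g'(u(t))\geq g'(u(r))$ on the support of $\xi$. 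This bounds the right-hand side from below by $c_N\,g'(u(r))\,r^N$. For the left-hand side, I would use the pointwise estimate $|u_r(t)|\leq C_{N,p}\,|u_r(r)|$ for $t\in[r/4,r]$, which follows from the monotonicity of $t\mapsto t^{N-1}|u_r(t)|^{p-1}$ (an immediate consequence of the integrated equation in the proof of \eqref{ineq_g}, using $g\geq 0$). For $p\geq 2$ this yields $|u_r(t)|^{p-2}\leq C'\,|u_r(r)|^{p-2}$, and the left-hand side is therefore dominated by $C''\,|u_r(r)|^{p-2}\,r^{N-2}$; dividing by $r^N$ then gives \eqref{ineq_dg}. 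For $r\in(1/2,1]$ one shrinks the support of $\xi$ so as not to touch $\partial B_1$, which only changes the constants.

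The main obstacle I anticipate is keeping the constant uniform in $p$ across the threshold $p=2$. For $1<p<2$ the exponent $p-2$ is negative, so the monotonicity bound $|u_r(t)|\leq C_{N,p}\,|u_r(r)|$ only produces a lower bound on $|u_r(t)|^{p-2}$, not an upper one. Resolving this will require replacing the symmetric cutoff by one whose support is chosen so that the monotonicity of $t^{N-1}|u_r(t)|^{p-1}$ controls $|u_r|$ from both sides on the support (for instance a cutoff supported strictly to the right of $r$ combined with a second iteration, or a change of variable $s=t^{N-1}|u_r|^{p-1}$), so as to turn the one-sided monotone bound into two-sided comparability. Once this is handled, the stability inequality gives the claimed estimate with a constant $M_{N,p}$ depending only on $N$ and $p$.
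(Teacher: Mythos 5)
Your argument for \eqref{ineq_g} is correct and is in substance the same as the paper's: the paper packages the integration of $(t^{N-1}\vert u_r\vert^{p-1})'=t^{N-1}g(u)$ into the statement that $\Psi(r)=-Nr^{1-1/N}\vert u_r(r^{1/N})\vert^{p-2}u_r(r^{1/N})$ is nonnegative, nondecreasing and concave, so that $\Psi'(r)\leq\Psi(r)/r$; this is exactly your ``pull out $g(u(r))$'' step (and note that only $\lim_{t\to0}t^{N-1}\vert u_r(t)\vert^{p-1}\geq 0$ is needed, so you need not prove that this limit vanishes, which is convenient since your integrability argument for that would anyway require $N\geq p$).

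For \eqref{ineq_dg} your overall scheme (semi-stability tested against a cutoff scaled to $r$, plus $g'(u(s))\geq g'(u(r))$ for $s\leq r$) is the right one, but, as you yourself note, it is incomplete for $1<p<2$, and the fixes you sketch do not obviously work: moving the support of $\xi$ to the right of $r$ destroys the inequality $g'(u(t))\geq g'(u(r))$ on the support, which is what lets you pull $g'(u(r))$ out of the right-hand side. The paper closes precisely this gap with a concrete device: from the equation and \eqref{ineq_g} it first derives $\vert u_{rr}\vert\leq\frac{2N-1}{p-1}\,\frac{\vert u_r\vert}{r}$, and then checks that $r^{\alpha}\vert u_r\vert^{p-2}$ is nondecreasing for $\alpha=\vert p-2\vert(2N-1)/(p-1)\geq 0$. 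This yields the one-sided bound $\vert u_r(s)\vert^{p-2}\leq (r/s)^{\alpha}\vert u_r(r)\vert^{p-2}$ for all $s\leq r$ and all $p>1$ simultaneously; the extra factor $s^{-\alpha}$ is harmless because it is absorbed into the weight of the test-function integral $\int_0^r s^{N-1-\alpha}\xi'(s)^2\,ds$, which under the scaling $\xi(s)=\zeta(s/r)$ contributes exactly $r^{N-\alpha-2}$. Alternatively, your annulus cutoff on $[r/4,r]$ can be salvaged without new machinery: besides the monotonicity of $t^{N-1}\vert u_r\vert^{p-1}$ (from $g\geq0$), one also has that $t^{-1}\vert u_r(t)\vert^{p-1}$ is nonincreasing (this is item ii) of Lemma \ref{lem_sol_ext}, and follows from the same concavity of $\Psi$ used for \eqref{ineq_g}); together these give the two-sided comparability $c_{N,p}\vert u_r(r)\vert\leq\vert u_r(t)\vert\leq C_{N,p}\vert u_r(r)\vert$ on $[r/4,r]$, hence $\vert u_r(t)\vert^{p-2}\leq C'_{N,p}\vert u_r(r)\vert^{p-2}$ for every $p>1$. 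As written, however, the proposal contains neither of these ingredients, so the case $1<p<2$ of \eqref{ineq_dg} remains a genuine gap.
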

\begin{proof}
Consider the function
\begin{equation}
\Psi(r):=-Nr^{1-1/N}\left \vert u_r\left (r^{1/N}\right )\right \vert^{p-2}u_r\left (r^{1/N}\right ),\,r\in (0,1].
\label{function_psi}
\end{equation}
It is easy to check to that $\Psi'(r)=g\left (u\left (r^{1/N}\right )\right )$, $r\in (0,1]$. As $g$ in nonnegative and nondecreasing we have that $\Psi$ is a nonnegative nondecreasing concave function. It follows immediately that
\begin{equation}
0\leq\Psi'(r)\leq\Psi(r)/r,\,r\in (0,1],
\label{ineq_psi}
\end{equation}
and we obtain \eqref{ineq_g}.

To obtain $ii)$, we first observe that from \eqref{mainequation} it is obtained
$$
u_{rr}=-\frac{1}{p-1}\left( \frac{g(u)}{\vert u_r\vert^{p-2}}+\frac{N-1}{r}u_r\right),\ \ \forall r\in (0,1].
$$
Therefore, using the nonnegativeness of $g$ and \eqref{ineq_g} we deduce that

\begin{equation}\label{cota_u_rr}
\vert u_{rr}\vert\leq \frac{1}{p-1}\left(\frac{g(u)}{\vert u_r\vert^{p-2}}+\frac{N-1}{r}\vert u_r\vert\right)\leq \left(\frac{2N-1}{p-1}\right)\frac{\vert u_r\vert}{r},\ \ \forall r\in (0,1].
\end{equation}

For fixed $\alpha\in\mathbb{R}$ an easy computation shows that
\begin{eqnarray*}
\partial_r\left (r^\alpha\vert u_r\vert^{p-2}\right )&=&\alpha r^{\alpha -1}\vert u_r\vert^{p-2}-(p-2)r^\alpha u_{rr}\vert u_r\vert^{p-3}\\
&\geq & r^{\alpha -1}\vert u_r\vert^{p-2}\left(\alpha-\frac{\vert p-2\vert (2N-1)}{p-1}\right),\ \ \forall r\in (0,1].
\end{eqnarray*}

Thus $r^\alpha\vert u_r\vert^{p-2}$ is nondecreasing for $\alpha=\frac{\vert p-2\vert (2N-1)}{p-1}$. Using this, the monotocity of $g'(u(r))$ and the semi-stability of $u$, we deduce that

\begin{eqnarray*}
g'(u(r))\int_0^rs^{N-1}\xi(s)^2ds&\leq&\int_0^rs^{N-1}g'(u(s))\xi(s)^2ds\\
&\leq& (p-1)\int_0^r\vert u_r(s)\vert^{p-2}s^\alpha s^{N-1-\alpha}\xi'(s)^2ds\\
&\leq& (p-1)\vert u_r(r)\vert^{p-2}r^\alpha \int_0^r s^{N-1-\alpha}\xi'(s)^2ds,
\end{eqnarray*}

\noindent for every $r\in (0,1)$ and every $\xi\in C^1$ with compact support in $(0,r)$.

Taking $\xi(s)=\zeta(\frac{s}{r})$ for $s\in [0,r]$, where $\zeta\in C^1$ is any  function with compact support in $(0,1)$, we obtain \eqref{ineq_dg}.
\end{proof}
\begin{proof}[Proof of the Theorem \ref{th_2}]
\begin{itemize}
\item[]
\item[i)] We first observe that $\partial_r\left (-r^{N-1}\vert u_r\vert^{p-2}u_r\right )=r^{N-1}g(u)$. Hence $-r^{N-1}\vert u_r\vert^{p-2}u_r$ is positive nondecreasing function and so is $\left (-r^{N-1}\vert u_r\vert^{p-2}u_r\right )^{\frac{p}{p-1}}$. Thus, for $0<r\leq 1/2$, we have
\begin{eqnarray*}
\int_0^{2r}{\vert u_r(t)\vert^pt^{N-1}\,dt}&\geq&\int_r^{2r}{\vert u_r(t)\vert^pt^{N-1}\,dt}\\
&=&\int_r^{2r}{\left (-t^{N-1}\vert u_r\vert^{p-2}u_r\right )^{\frac{p}{p-1}}t^{N-\frac{p(N-1)}{p-1}-1}\,dt}\\
&\geq&r^{\frac{p(N-1)}{p-1}}\vert u_r(r)\vert^p\int_r^{2r}{t^{N-\frac{p(N-1)}{p-1}-1}\,dt}\\
&=&r^N\vert u_r(r)\vert^p\int_1^{2}{t^{-\frac{N-1}{p-1}}\,dt},\\
\end{eqnarray*}
from this and Lemma \ref{Lemma_1} we obtain $i)$.
\item[ii)] 
Since \eqref{cota_u_rr} and $i)$ it follows $ii)$.
\item[iii)] From \eqref{mainequation} we obtain
\[
u_{rrr}=-\frac{1}{p-1}\left (\frac{g'(u)u_r}{\vert u_r\vert^{p-2}}-(p-2)\frac{u_ru_{rr}g(u)}{\vert u_r\vert^p}-\frac{N-1}{r^2}u_r+\frac{N-1}{r}u_{rr}\right ),
\]
\noindent for every $r\in (0,1)$. Therefore from \eqref{ineq_g}, \eqref{ineq_dg} and \eqref{cota_u_rr}, we obtain
\begin{eqnarray*}
\vert u_{rrr}\vert&\leq&\frac{1}{p-1}\left (\frac{g'(u)\vert u_r\vert}{\vert u_r\vert^{p-2}}+\vert p-2\vert\frac{\vert u_r\vert\vert u_{rr}\vert g(u)}{\vert u_r\vert^p}+\frac{N-1}{r^2}\vert u_r\vert+\frac{N-1}{r}\vert u_{rr}\vert\right )\\
&\leq&\frac{1}{p-1}\left (M_{N,p}+\frac{N(2N-1)\vert p-2\vert}{p-1}+(N-1)+\frac{(N-1)(2N-1)}{p-1}\right )\frac{\vert u_r\vert}{r^2},
\end{eqnarray*}
\noindent for every $r\in (0,1]$, and $iii)$ follows from $i)$.
\end{itemize}
\end{proof}

\begin{lemma} Let $N\geq 1$, $p>1$, $g:\re\longrightarrow\re$ be a locally Lipschitz nonnegative and nondecreasing function and $u$ be a radial solution of \eqref{mainequation} satisfying $u_r(r)<0$ for all $r\in (0,1)$. Then
\begin{itemize}
\item[i)] $r^{N-1}\vert u_r\vert^{p-1}$ is nondecreasing for $r\in (0,1]$.
\item[ii)] $r^{-1}\vert u_r\vert^{p-1}$  is nonincreasing for $r\in (0,1]$.
\item[iii)] $\max_{t\in[1/2,1]}{\vert u_r(t)\vert}\leq 2^\frac{N}{p-1}\min_{t\in[1/2,1]}{\vert u_r(t)\vert}$.
\item[iv)] $\norm{\D u}_{\leb{p}{B_1\setminus B_{1/2}}}\leq q_{N,p}\min_{t\in[1/2,1]}{\vert u_r(t)\vert}$ for a certain constant $q_{N,p}$ depending only on $N$ and $p$.
\end{itemize}
\label{lem_sol_ext}
\end{lemma}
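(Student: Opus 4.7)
The plan is to prove the four parts of Lemma \ref{lem_sol_ext} in order, each building on the previous ones. Parts (i) and (ii) give opposite monotonicity statements for two different weightings of $|u_r|^{p-1}$; parts (iii) and (iv) use them to control $\max_{[1/2,1]}|u_r|$, $\min_{[1/2,1]}|u_r|$, and $\norm{\D u}_{\leb{p}{B_1\setminus B_{1/2}}}$ against one another.

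For (i) I would simply write \eqref{mainequation} in radial form as
$\partial_r\bigl(-r^{N-1}|u_r|^{p-2}u_r\bigr) = r^{N-1}g(u(r)) \geq 0$,
and use $u_r<0$ to identify $-|u_r|^{p-2}u_r = |u_r|^{p-1}$; the conclusion is then immediate. For (ii) I would differentiate directly,
$\partial_r\bigl(r^{-1}|u_r|^{p-1}\bigr) = -r^{-2}|u_r|^{p-1} - (p-1)r^{-1}|u_r|^{p-2}u_{rr}$,
and substitute the identity for $u_{rr}$ used at the start of the proof of Lemma \ref{lemma23} (equivalently, the expression leading to \eqref{cota_u_rr} before any estimates are made). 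After cancellation the right-hand side collapses to $r^{-1}\bigl(g(u) - Nr^{-1}|u_r|^{p-1}\bigr)$, which is $\leq 0$ by \eqref{ineq_g} of Lemma \ref{lemma_g}. This is the only step in the lemma where the hypotheses $g\geq 0$ and $g$ nondecreasing are actually used.

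For (iii) let $t_1,t_2\in[1/2,1]$ achieve the maximum and the minimum of $|u_r|$ respectively, and split into two cases. If $t_1\leq t_2$, applying (i) at the two points gives $|u_r(t_1)|^{p-1}\leq (t_2/t_1)^{N-1}|u_r(t_2)|^{p-1}\leq 2^{N-1}|u_r(t_2)|^{p-1}$, so $\max\leq 2^{(N-1)/(p-1)}\min$. If $t_2\leq t_1$, applying (ii) gives $|u_r(t_1)|^{p-1}\leq (t_1/t_2)|u_r(t_2)|^{p-1}\leq 2|u_r(t_2)|^{p-1}$, so $\max\leq 2^{1/(p-1)}\min$. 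Both bounds are dominated by $2^{N/(p-1)}$. For (iv) I would rewrite $\norm{\D u}_{\leb{p}{B_1\setminus B_{1/2}}}^p$ as a dimensional constant times $\int_{1/2}^1 |u_r(t)|^p t^{N-1}\,dt$, pull $\bigl(\max_{[1/2,1]}|u_r|\bigr)^p$ outside the integral, and then use (iii) to replace the max by the min, collecting all numerical constants into $q_{N,p}$.

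The only step with any real content is (ii); the other three are direct. In (ii) the delicate point is the sign bookkeeping ($u_r<0$ forces one to differentiate $|u_r|^{p-1}=(-u_r)^{p-1}$ carefully), together with the observation that the $u_{rr}$ term produced by the radial equation exactly cancels the $(N-1)r^{-2}|u_r|^{p-1}$ that appears, leaving precisely the quantity $g(u) - Nr^{-1}|u_r|^{p-1}$ whose sign is controlled by \eqref{ineq_g}.
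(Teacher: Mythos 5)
Your proof is correct and, for parts (i), (iii) and (iv), essentially identical to the paper's. For part (ii) the paper instead observes that $\Psi(r)=-Nr^{1-1/N}\vert u_r(r^{1/N})\vert^{p-2}u_r(r^{1/N})$ is nonnegative, nondecreasing and concave, so $\Psi(r)/r$ is nonincreasing, which after the change of variables $s=r^{1/N}$ is exactly your statement; your direct differentiation, which collapses to $r^{-1}\left(g(u)-Nr^{-1}\vert u_r\vert^{p-1}\right)\leq 0$, is the same fact unpacked, since \eqref{ineq_g} is itself proved via that concavity argument. Two small points of hygiene: Lemma \ref{lemma23} is \emph{stated} for semi-stable $W^{1,p}$ solutions, hypotheses absent from Lemma \ref{lem_sol_ext}, so you should note that the proof of \eqref{ineq_g} uses only the equation, $u_r<0$ and the sign and monotonicity of $g$ (which is why the paper cites the proof rather than the statement); and your remark that (ii) is the only step using $g\geq 0$ is inaccurate, since (i) already needs $g\geq 0$ to conclude $\partial_r\left(r^{N-1}\vert u_r\vert^{p-1}\right)=r^{N-1}g(u)\geq 0$.
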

\begin{proof}\
\begin{itemize}
\item[i)] Since $u_r<0$ we have $\partial_r\left (r^{N-1}\vert u_r\vert^{p-1}\right )=r^{N-1}g(u)\geq 0$
\item[ii)] As we have observed in the proof of Lemma \ref{lemma23}, the function $\Psi(r)=-Nr^{1-1/N}\left \vert u_r\left (r^{1/N}\right )\right \vert^{p-2}u_r\left (r^{1/N}\right )$ is nonnegative, nondecreasing and concave for $r\in(0,1]$. Therefore $\Psi(r)/r$ is nonincreasing, and $ii)$ follows immediately.
\item[iii)] Take $r_1,r_2\in [1/2,1]$ such that $\vert u_r(r_1)\vert =\min_{t\in[1/2,1]}{\vert u_r (t)\vert}$ and $\vert u_r(r_2)\vert =\max_{t\in[1/2,1]}{\vert u_r (t)\vert}$.\\
-If $r_2 \leq r_1$, we deduce from $i)$ that $\vert u_r(r_2 )\vert^{p-1}\leq (r_1/r_2)^{N-1}\vert u_r (r_1 )\vert^{p-1}\leq 2^{N}\vert u_r (r_1 )\vert^{p-1}$.\\
-If $r_2 > r_1$, we deduce from $ii)$ that $\vert u_r(r_2 )\vert^{p-1}\leq (r_2/r_1)\vert u_r (r_1 )^{p-1}\vert\leq 2\vert u_r (r_1 )\vert^{p-1}\leq 2^{N}\vert u_r (r_1 )\vert^{p-1}$.
\item[iv)] We see at once that
\[
\norm{\D u}_{\leb{p}{B_1\setminus B_{1/2}}}\leq \vert {B_1\setminus B_{1/2}}\vert^{1/p}\max_{t\in[1/2,1]}{\vert u_r (t)\vert}
\]

\end{itemize}
\end{proof}
\begin{proof}[Proof of Theorem \ref{th_ext}]
As we have mentioned, it is well known that $u^*$ is a semi-stable radially decreasing $\sob{B_1}$ solution of \eqref{mainequation} for $g(s)=\lambda f(s)$. Hence, we can apply to $u^*$ the results obtained in Theorems \ref{th_1} and \ref{th_2} and Lemma \ref{lem_sol_ext}.

Let us first prove $i)$, $ii)$, and $iii)$ for $r\in(0,1/2)$. Since $u^*(1)=0$, and on account of statement $iv)$ of Lemma \ref{lem_sol_ext}, we have
\[
\Vert u^*\Vert_{\sob{B_1\setminus \overline{B_{1/2}}}}\leq h_{N,p}\norm{\D u^*}_{\leb{p}{B_1\setminus B_{1/2}}}\leq h'_{N,p}\min_{t\in[1/2,1]}{\vert u^*_r (t)\vert},
\]
for certain constants $h_{N,p},h'_{N,p}$ depending only on $N$ and $p$. From this and Theorem \ref{th_1}:

\begin{itemize}
\item[i)] follows from the inequality $1\leq 2(1-r)$, for $r\in(0,1/2)$.
\item[ii)] follows from the inequality $\vert \log{r}\vert+1 \leq \left(\frac{\log{2}+1}{\log{2}}\right ) \vert \log{r}\vert$, for $r\in(0,1/2)$.

\item[iii)] follows from the inequality
\[
r^{{-\frac{1}{p}\left(N-2\sqrt{\frac{N-1}{p-1}}-p-2\right)}} \leq\left (\frac{(1/2)^{{-\frac{1}{p}\left(N-2\sqrt{\frac{N-1}{p-1}}-p-2\right)}}}{(1/2)^{{-\frac{1}{p}\left(N-2\sqrt{\frac{N-1}{p-1}}-p-2\right)}}-1},
\right )\left (r^{{-\frac{1}{p}\left(N-2\sqrt{\frac{N-1}{p-1}}-p-2\right)}}-1\right )
\]
\end{itemize}

\noindent for $r\in (0,1/2)$.

We next show $i)$, $ii)$, and $iii)$ for $r\in[1/2,1]$. From statement $iii)$ of Lemma \ref{lem_sol_ext} it follows that
\[
u^*(r)=\int_r^1{\vert u^*(t)\vert\,dt}\leq(1-r)2^\frac{N}{p-1}\min_{t\in[1/2,1]}{\vert u^*_r(t)\vert},\,\forall r\in[1/2,1],
\]
which is the desired conclusion if $N\leq p+4p/(p-1)$. If $N=p+4p/(p-1)$, our claim follows from the inequality $1-r\leq\vert \log{r}\vert$, for $r\in[1/2,1]$. Finally, if $N>p+4p/(p-1)$, the desired conclusion follows immediately from the inequality $1-r\leq z_{N,p}\left (r^{{-\frac{1}{p}\left(N-2\sqrt{\frac{N-1}{p-1}}-p-2\right)}}-1\right )$, for $r\in[1/2,1]$ and certain constant $z_{n,p}>0$.

We now prove statement $iv)$. In the case $k=1$ and $r\in(0,1/2)$, it follows immediatety from statement $i)$ of Theorem \ref{th_2} and statement $iv)$ of Lemma \ref{lem_sol_ext}. The case $k=1$ and $r\in[1/2,1]$ is also obvious on account of statement $iii)$ of Lemma \ref{lem_sol_ext} and inequality $1\leq r^{{-\frac{1}{p}\left(N-2\sqrt{\frac{N-1}{p-1}}-p-2\right)}}$, for $r\in[1/2,1]$, for $N\geq p+4p/(p-1)$.
Finally, as in the proof of statement $ii)$ and $iii)$ for $f$ convex, of Theorem \ref{th_2}, we have
\[
\vert u^*_{rr}(r)\vert\leq\left (\frac{2N-1}{p-1}\right)\frac{\vert u^*_{r}(r)\vert}{r},
\]
and
\[
\vert u^*_{rrr}(r)\vert\leq s_{N,p}\frac{\vert u^*_{r}(r)\vert}{r^2},
\]
for $r\in(0,1]$ and certain constant $s_{n,p}>0$., which gives statement $iv)$ and $v)$ from the case $k=1$.

\end{proof}
\section{A family of semi-stable solutions}
\begin{theorem}
Let $h\in (C^2\cap L^1 )(0, 1]$ be a nonnegative function and consider
\[
\Phi(r)=r^{2\sqrt{\frac{N-1}{p-1}}}\left (1+\int_0^r{h(s)\,ds}\right )\;\forall r\in(0,1].
\]
Define $u_r < 0$ by
\[
\Phi'(r)=(N-1)r^{N-3}\vert u_r (r)\vert^p\; \forall r \in (0, 1].
\]
Then, for $N \geq p+4p/(p-1)$, $u$ is a semi-stable radially decreasing unbounded $\sob{B_1}$ solution of a problem of the type \eqref{mainequation}, where $u$ is any function with radial derivative $u_r$.
\label{theorem_family}
\end{theorem}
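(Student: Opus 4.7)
The plan is to verify in sequence: (i) $u\in W^{1,p}(B_1)$ is radially decreasing and unbounded; (ii) there is a locally Lipschitz $g$ with $-\Delta_p u=g(u)$ in $B_1\setminus\{0\}$; (iii) $u$ is semi-stable. Write $w:=|u_r|>0$, $\alpha:=2\sqrt{(N-1)/(p-1)}$, and $\Theta(r):=1+\int_0^r h$, so the construction reads $\Phi'(r)=(N-1)r^{N-3}w(r)^p$ with $\Phi=r^{\alpha}\Theta$. I choose $u(r):=\int_r^1 w(s)\,ds$. The identity $w^p r^{N-1}=\Phi'(r)r^2/(N-1)$ together with one integration by parts gives $\int_0^1 w^p r^{N-1}\,dr=\Phi(1)/(N-1)-(2/(N-1))\int_0^1 r\Phi\,dr<\infty$ (since $\alpha>0$ and $h\in L^1$), so $u\in W^{1,p}(B_1)$. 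For unboundedness, $\Phi'(r)\geq\alpha r^{\alpha-1}$ yields $w(r)\geq c\,r^{(\alpha-N+2)/p}$ near $0$, and the hypothesis $N\geq p+4p/(p-1)$ is exactly $\alpha\leq N-p-2$, i.e.\ $(\alpha-N+2)/p\leq -1$, forcing $\int_0^{1/2}w\,dr=+\infty$ and $u(r)\to+\infty$ as $r\to 0^+$.

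Since $h\in C^2$, one has $\Phi\in C^3$, hence $w\in C^2(0,1]$ and $u\in C^3(0,1]$. Strict monotonicity $u_r<0$ makes $r\mapsto u(r)$ a $C^3$-diffeomorphism from $(0,1]$ onto $[0,+\infty)$; denoting its inverse by $r(\cdot)$, I define $g(t):=-\Delta_p u(r(t))$ on $[0,+\infty)$ and extend it to $\re$ as a locally Lipschitz function. Then $-\Delta_p u=g(u)$ in $B_1\setminus\{0\}$ by construction, and $g\in C^1$ on the range of $u$.

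The semi-stability step will be the main obstacle. My plan is to use the explicit test function $\psi(r):=r^{-\alpha/2}w(r)$, which is the natural choice because it makes the linearized operator $L\psi:=-(p-1)(w^{p-2}r^{N-1}\psi')'-g'(u)r^{N-1}\psi$ vanish identically in the pure-power case $h\equiv 0$ (the marginally semi-stable model). Given any radial $\xi\in C_c^1(B_1\setminus\{0\})$, set $\zeta:=\xi/\psi\in C_c^1(B_1\setminus\{0\})$ (legitimate since $\psi\in C^2$ is strictly positive on $(0,1)$). A standard integration by parts on the cross term yields
\[
\int_0^1\bigl[(p-1)w^{p-2}(\xi')^2-g'(u)\xi^2\bigr]r^{N-1}\,dr=(p-1)\int_0^1 w^{p-2}r^{N-1}\psi^2(\zeta')^2\,dr+\int_0^1\zeta^2\,\psi L\psi\,dr.
\]
A direct (but intricate) computation using the formula for $g'(u)$ obtained by differentiating $-\Delta_p u=g(u)$, together with $w^p=\Phi'/[(N-1)r^{N-3}]$ and the defining identity $(p-1)\alpha^2=4(N-1)$, collapses $\psi L\psi$ to the remarkably simple closed form $(2/\alpha)\bigl[(\alpha+1)h(r)+rh'(r)\bigr]$; the cancellation is forced by the specific choice $\beta=-\alpha/2$ combined with the definition of $\alpha$.

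To finish, I integrate the $rh'$ term by parts to convert the tail into $2\int_0^1 h\zeta^2\,dr-(4/\alpha)\int_0^1 rh\zeta\zeta'\,dr$. Young's inequality with parameter $\varepsilon=\alpha$ bounds the cross term by $2\int h\zeta^2+(2/\alpha^2)\int hr^2(\zeta')^2$, cancelling the $\int h\zeta^2$ contribution exactly and reducing the remainder to
\[
\int_0^1\bigl[(p-1)w^{p-2}r^{N-1}\psi^2-2hr^2/\alpha^2\bigr](\zeta')^2\,dr.
\]
Using $\Phi'=r^{\alpha-1}(\alpha\Theta+rh)$ and $(p-1)/(N-1)=4/\alpha^2$, the bracketed coefficient simplifies to $4\Theta(r)r/\alpha+2h(r)r^2/\alpha^2$, which is manifestly nonnegative. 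This gives semi-stability. The delicate points are the precise choice of $\psi$ (tailored so that the leading pure-power contributions cancel by virtue of the defining relation for $\alpha$) and the fact that the Young parameter $\varepsilon=\alpha$ is essentially forced on us, being the unique value that makes the two $\int h\zeta^2$ contributions balance exactly.
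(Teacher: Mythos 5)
Your proposal is correct, and its first three steps (showing $u\in W^{1,p}(B_1)$ from $r^2\Phi'\in L^1$, unboundedness from $\Phi'(r)\geq\alpha r^{\alpha-1}$ together with the equivalence of $N\geq p+4p/(p-1)$ and $(\alpha-N+2)/p\leq-1$, and the construction of $g$ by inverting $u$) coincide with the paper's. Where you genuinely diverge is the semi-stability step. The paper reduces semi-stability, via \cite[Lem.~2.2]{MR2476421}, to $(p-1)\int_0^1 r^{N-1}|u_r|^p\xi'^2\geq(N-1)\int_0^1 r^{N-3}|u_r|^p\xi^2$, rewrites both sides through $\Phi'$, uses $h\geq0$ only in the form $\Phi'\geq\alpha\Phi/r$ (hence $\tfrac{p-1}{N-1}r^2\Phi'\geq4\Phi^2/\Phi'$), and concludes with the black-box generalized Hardy inequality of Lemma~\ref{lemma_gen_hardy}. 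You instead make the single ground-state substitution $\xi=\psi\zeta$, $\psi=r^{-\alpha/2}|u_r|$, in the original stability form, compute the potential exactly, and close with an integration by parts plus a tuned Young inequality. I verified your asserted identity: composing $\xi=|u_r|\eta$ (which re-derives the reduction of \cite[Lem.~2.2]{MR2476421}) with $\eta=r^{-\alpha/2}\zeta$, the zeroth-order terms cancel precisely because $(p-1)\alpha^2/4=N-1$, and the surviving cross term equals $\tfrac{2}{\alpha}\int(r^{1-\alpha}\Phi')'\zeta^2$ with $(r^{1-\alpha}\Phi')'=(\alpha+1)h+rh'$, confirming $\psi L\psi=\tfrac{2}{\alpha}\left[(\alpha+1)h+rh'\right]$; your Young step with $\varepsilon=\alpha$ and the final coefficient $4\Theta r/\alpha+2hr^2/\alpha^2\geq0$ are likewise right. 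The trade-off: the paper's route is more modular and robust, needing only the one-sided bound $\Phi'\geq\alpha\Phi/r$ and a reusable Hardy-type lemma, whereas yours is self-contained (no appeal to Lemma~\ref{lemma_gen_hardy}) and shows exactly where the nonnegativity of $h$ enters, at the cost of an exact computation that you assert rather than display --- since that identity carries essentially the whole weight of the semi-stability argument, it must be written out in full in any final version.
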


To prove this theorem, we will use the following lemma, which is a generalization of the classical Hardy inequality:
\begin{lemma}{\cite{MR2885956}}
Let $\Phi\in C^1(0, L)$, $0 < L \leq\infty$, satisfying $\Phi' > 0$. Then
\[
\int_0^L{\frac{4\Phi^2}{\Phi'}\xi'^2}\geq\int_0^L{\Phi'\xi^2},
\]
for every $\xi\in C_c^{\infty}(0, L)$.
\label{lemma_gen_hardy}
\end{lemma}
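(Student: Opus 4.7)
The plan is to derive the inequality by integration by parts followed by a Cauchy--Schwarz step. First I would use that $\xi \in C_c^{\infty}(0, L)$ has compact support, so that $\Phi$ can serve as an antiderivative of $\Phi'$ on the support of $\xi$ with no boundary contribution, and integrate by parts to obtain
\[
\int_0^L \Phi' \, \xi^2 \, dt = -2 \int_0^L \Phi \, \xi \, \xi' \, dt.
\]
Because $\Phi' > 0$ on the (compact) support of $\xi$, the quantities $\sqrt{\Phi'}$ and $\Phi/\sqrt{\Phi'}$ are well-defined and bounded there, so I would factor the integrand on the right as $(\sqrt{\Phi'}\, \xi) \cdot (\Phi \, \xi'/\sqrt{\Phi'})$ and apply the Cauchy--Schwarz inequality:
\[
\int_0^L \Phi' \, \xi^2 \, dt \leq 2 \left( \int_0^L \Phi' \, \xi^2 \, dt \right)^{1/2} \left( \int_0^L \frac{\Phi^2}{\Phi'}\, (\xi')^2 \, dt \right)^{1/2}.
\]
If the left-hand side vanishes the conclusion is trivial; otherwise I would divide by $\left(\int_0^L \Phi' \, \xi^2\right)^{1/2}$ and square to obtain the stated estimate with the constant $4$.

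Equivalently, one can argue directly by Young's inequality $2|ab| \leq \delta^{-1} a^2 + \delta b^2$ applied to $a = \sqrt{\Phi'}\, |\xi|$, $b = \Phi\, |\xi'|/\sqrt{\Phi'}$: the factor of $\xi^2$ on the right absorbs into the left-hand side, and optimizing in $\delta$ leads to $\delta = 2$ and hence to the sharp factor $4$.

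I do not anticipate any serious obstacle. The compactness of $\mathrm{supp}(\xi) \subset (0, L)$ and the continuity of $\Phi$, $\Phi'$ with $\Phi' > 0$ guarantee that every integral appearing in the argument is finite and that the integration by parts is legitimate; the fact that the weight on the right involves $\Phi^2$ rather than $\Phi$ is what allows the inequality to hold without any sign hypothesis on $\Phi$ itself.
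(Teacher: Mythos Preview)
Your proof is correct. The paper itself does not supply a proof of this lemma: it is quoted from \cite{MR2885956} and used as a black box in the proof of Theorem~\ref{theorem_family}. Your integration-by-parts followed by Cauchy--Schwarz is the standard argument for this Hardy-type inequality, and every step is justified as you note by the compact support of $\xi$ in $(0,L)$ together with the continuity and strict positivity of $\Phi'$ there.
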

\begin{proof}[Proof of the Theorem \ref{th_1}.]
 First of all, since $\Phi\in C^1(0, 1]\cap C[0, 1]$ is an increasing function, we obtain $\Phi'\in L^1(0, 1) $ and hence $r^{N-1}\vert u_r (r)\vert^p=r^2\Phi'(r)/(N-1)\in L^1(0, 1)$, which gives $u\in \sob{B_1}$.

On the other hand, since $\Phi'(r)\geq 2\sqrt{\frac{N-1}{p-1}} r^{2\sqrt{\frac{N-1}{p-1}}-1}$, $r\in(0,1]$, we deduce
\[
\vert u_r(r)\vert\geq \left (\frac{2}{N-1}\sqrt{\frac{N-1}{p-1}}\right )^{\frac{1}{p}}r^{{-\frac{1}{p}\left(N-2\sqrt{\frac{N-1}{p-1}}-2\right)}}.
\]
As $N\geq p+4p/(p-1)$, we have ${-\frac{1}{p}\left(N-2\sqrt{\frac{N-1}{p-1}}-2\right)}\leq -1$. It follows that $u_r\not\in L^1(0,1)$, and since $u$ is radially decreasing, we obtain $\lim_{r\rightarrow 0}{u(r)}=+\infty$.

Since $h\in C^2(0,1]$, it follows that $u_r \in C^2(0, 1]$. Therefore, $\Delta_p u\in C^1\left (\overline{B_1}\setminus\{0\} \right )$. Hence, taking  $g\in C^1(\re)$ such that $g(s)=-\Delta_p u(u^{-1}(s))$, for $s\in [u(1),+\infty)$, we conclude that $u$ is solution of a problem of the type \eqref{mainequation}.

It remains to prove that $u$ is semi-stable. Taking into account that $u_r\neq 0$ in $(0, 1]$ and applying \cite[Lem. 2.2]{MR2476421}, the semi-stability of $u$ is equivalent to
\begin{equation}
(p-1)\int_0^1{r^{N-1}\vert u_r\vert^p \xi'^2\,dx}\geq (N-1)\int_0^1{r^{N-3}\vert u_r\vert^p\xi^2\,dx},
\label{des_5}
\end{equation}
for every $\xi\in C_c^{\infty}(0,1)$.

For this purpose, we will apply the Lemma \ref{lemma_gen_hardy} above. From the definition of $\Phi$, it is easily seen that $\Phi'(r)\geq 2\sqrt{\frac{N-1}{p-1}}\frac{\Phi}{r}$, $r\in(0,1]$ It follows that
\[
\left (\frac{p-1}{N-1}\right )r^2\Phi'\geq\frac{4\Phi^2}{\Phi'}\text{ in }(0,1].
\]
Finally, since $\Phi'(r)=(N-1)r^{N-3}\vert u_r (r)\vert^p$, we deduce \eqref{des_5} by applying Lemma \ref{lemma_gen_hardy}.
\end{proof}

\begin{proposition}
Let $\{r_n\}\subset (0,1]$, $\{M_n\}\subset\re^+$ two sequences with $r_n\downarrow 0$. Then, for $N \geq p+4p/(p-1)$, there exists $u\in \sob{B_1}$, which is a semi-stable radially decreasing unbounded solution of a problem of the type \eqref{mainequation}, satisfying
\[
\vert u_r(r_n)\vert\geq M_n\;\;\forall n\in\mathbb{N}.
\]
\end{proposition}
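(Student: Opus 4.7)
The plan is to reduce the statement to an explicit construction via Theorem~\ref{theorem_family}. That theorem gives us complete freedom to prescribe a nonnegative $h\in(C^2\cap L^1)(0,1]$, and the resulting function $u$ automatically solves a problem of type \eqref{mainequation}, is radially decreasing, unbounded, in $W^{1,p}(B_1)$, and semi-stable whenever $N\geq p+4p/(p-1)$. So the only task is to choose $h$ cleverly enough to force $|u_r(r_n)|\geq M_n$.

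To translate the target bound into a bound on $h$, note that both summands in
\[
\Phi'(r)=2\sqrt{\tfrac{N-1}{p-1}}\,r^{2\sqrt{\frac{N-1}{p-1}}-1}\!\left(1+\int_0^r h\right)+r^{2\sqrt{\frac{N-1}{p-1}}}h(r)
\]
are nonnegative, so $(N-1)r^{N-3}|u_r(r)|^p=\Phi'(r)\geq r^{2\sqrt{(N-1)/(p-1)}}h(r)$. Hence it suffices to build $h$ satisfying
\[
h(r_n)\geq c_n:=(N-1)M_n^{p}\,r_n^{(N-3)-2\sqrt{\frac{N-1}{p-1}}},\qquad\forall n\in\mathbb{N}.
\]

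I would construct $h$ as a locally finite sum of disjointly supported $C^2$ bumps. Fix $\psi\in C_c^{\infty}(-1,1)$ nonnegative with $\psi(0)=1$, and set $h(r):=\sum_{n\geq 1}c_n\,\psi\!\left(\frac{r-r_n}{\delta_n}\right)$, where the widths $\delta_n>0$ are chosen small enough that: (a) the intervals $[r_n-\delta_n,r_n+\delta_n]$ lie in $(0,1]$ and are pairwise disjoint (possible since $r_n\downarrow 0$); and (b) $c_n\delta_n\leq 2^{-n}$, which ensures $\|h\|_{L^1(0,1)}\leq\|\psi\|_{L^1(\mathbb{R})}\sum_{n}c_n\delta_n<\infty$. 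At each point $r_0\in(0,1]$ only finitely many summands are nonzero in a neighborhood of $r_0$ (because $r_n+\delta_n\to 0$), so $h\in C^2(0,1]$; moreover $h\geq 0$ and $h(r_n)=c_n$ by construction.

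Applying Theorem~\ref{theorem_family} to this $h$ yields the required $u$, and the opening inequality gives $|u_r(r_n)|^p\geq r_n^{2\sqrt{(N-1)/(p-1)}-(N-3)}h(r_n)/(N-1)=M_n^p$, as desired. There is no real obstacle here beyond bookkeeping: the only thing to watch is the simultaneous smoothness and integrability of the infinite sum, which is controlled by the disjointness of supports and the summability condition $c_n\delta_n\leq 2^{-n}$.
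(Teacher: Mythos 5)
Your proposal is correct and follows essentially the same route as the paper: both reduce the claim to Theorem~\ref{theorem_family} by choosing a nonnegative $h\in(C^2\cap L^1)(0,1]$ with $h(r_n)=(N-1)M_n^p r_n^{N-2\sqrt{\frac{N-1}{p-1}}-3}$ and then using $\Phi'(r)\geq r^{2\sqrt{\frac{N-1}{p-1}}}h(r)$. The only difference is that you spell out the bump-function construction of $h$ that the paper dismisses as ``easily seen.''
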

\begin{proof}
It is easily seen that for every sequences $\{r_n\}\subset (0,1]$, $\{y_n\}\subset\re^+$, with $r_n\downarrow 0$, there exists a nonnegative function $h\in(C^2\cap L^1 )(0, 1]$ satisfying $h(r_n)=y_n$. Take $y_n=(N-1)M_n^pr_n^{N-2\sqrt{\frac{N-1}{p-1}}-3}$ and apply Theorem \ref{theorem_family} with this function $h$. It is clear, from the definition of $\Phi$, that $\Phi'(r)\geq r^{2\sqrt{\frac{N-1}{p-1}}}h(r)$, $r\in (0,1]$. Hence
\[
(N-1)r_n^{N-3}\vert u_r (r_n)\vert^p=\Phi'(r_n)\geq r_n^{2\sqrt{\frac{N-1}{p-1}}}h(r_n)=r_n^{2\sqrt{\frac{N-1}{p-1}}}(N-1)M_n^pr_n^{N-2\sqrt{\frac{N-1}{p-1}}-3},
\]
and the proposition follows.
\end{proof}
\begin{corollary}\label{nohay}
Let $N\geq p+4p/(p-1)$. There does not exist a function $\psi : (0, 1]\rightarrow \re^+$ with the following property: for every $u\in \sob{B_1}$ semi-stable radially decreasing solution of a problem of the type \eqref{mainequation}, there exist $C > 0$ and $\epsilon\in (0, 1]$ such that $\vert u_r (r)\vert\leq C\psi (r)$ for $r\in (0, \epsilon]$.
\label{corollary_u_r}
\end{corollary}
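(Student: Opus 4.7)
The plan is a straightforward diagonal argument that leverages the preceding Proposition, which produces, for any prescribed sequence $\{r_n\}\downarrow 0$ and any prescribed sequence $\{M_n\}\subset\re^+$, a semi-stable radially decreasing unbounded $\sob{B_1}$ solution $u$ of some problem of the form \eqref{mainequation} with $\vert u_r(r_n)\vert\geq M_n$ for all $n$. The Corollary then follows by choosing these two sequences so as to defeat any candidate $\psi$.

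I would argue by contradiction: suppose there exists $\psi:(0,1]\to\re^+$ with the property stated in the corollary. First I would fix any sequence $r_n\downarrow 0$ in $(0,1]$; the canonical choice is $r_n=1/n$. Next, for each $n\in\mathbb{N}$, I set
\[
M_n = n\,\psi(r_n) + 1,
\]
which is a positive real number since $\psi(r_n)>0$. This choice is engineered so that
\[
\frac{M_n}{\psi(r_n)}\geq n\longrightarrow +\infty.
\]
Now apply the Proposition to the sequences $\{r_n\}$ and $\{M_n\}$: one obtains a semi-stable radially decreasing unbounded $\sob{B_1}$ solution $u$ of a problem of the type \eqref{mainequation} such that $\vert u_r(r_n)\vert\geq M_n$ for all $n\in\mathbb{N}$.

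By the assumed property of $\psi$, there exist $C>0$ and $\epsilon\in(0,1]$ with $\vert u_r(r)\vert\leq C\psi(r)$ for every $r\in(0,\epsilon]$. Pick $n_0\in\mathbb{N}$ large enough that $r_n\leq\epsilon$ and $n>C$ whenever $n\geq n_0$. Then, for any such $n$,
\[
C\psi(r_n)\geq\vert u_r(r_n)\vert\geq M_n = n\psi(r_n)+1 > C\psi(r_n)+1,
\]
which is a contradiction. Therefore no such $\psi$ can exist. The only nonroutine input is the Proposition itself, which has already been established; the corollary is essentially a packaging of that existence result into a nonexistence statement for universal upper envelopes, and I do not expect any technical obstacle in the diagonal step once the sequence $M_n$ is chosen to grow faster than $n\psi(r_n)$.
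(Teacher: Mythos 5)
Your proposal is correct and is essentially identical to the paper's own proof: both argue by contradiction, take $r_n=1/n$ and $M_n$ growing like $n\psi(1/n)$, and invoke the preceding Proposition to produce a solution violating the assumed bound. The extra ``$+1$'' and the explicit choice of $n_0$ are just a slightly more pedantic write-up of the same diagonal argument.
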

\begin{proof}
Suppose that such a function $\psi$ exists and consider the sequences $r_n = 1/n$, $M_n = n \psi(1/n)$. By the proposition above, there exists $u\in \sob{B_1}$, which is a semi-stable radially decreasing unbounded solution of a
problem of the type \eqref{mainequation}, satisfying $\vert u_r (1/n)\vert \geq n \psi (1/n)$, a contradiction.
\end{proof}

\begin{proposition}
Let $\{r_n\}\subset (0,1]$, $\{M_n\}\subset\re^+$ two sequences with $r_n\downarrow 0$. Then, for $N \geq p+4p/(p-1)$, there exists $u\in \sob{B_1}$, which is a semi-stable radially decreasing unbounded solution of a problem of the type \eqref{mainequation} with $g\geq 0$, satisfying
\[
\vert u_{rr}(r_n)\vert\geq M_n\;\;\forall n\in\mathbb{N}.
\]
\label{proposition_u_rr}
\end{proposition}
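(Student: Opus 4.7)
The plan is to follow the strategy of the preceding proposition: apply Theorem~\ref{theorem_family} with a carefully chosen $h\in(C^2\cap L^1)(0,1]$, now additionally engineered so that the sign condition $g\geq 0$ holds and $|u_{rr}(r_n)|\geq M_n$ for all $n$. First I would differentiate the defining identity $(N-1)r^{N-3}|u_r|^p=\Phi'(r)$ (using $u_r<0$) to obtain an explicit formula
\[
u_{rr}(r)=-\frac{r\Phi''(r)-(N-3)\Phi'(r)}{p(N-1)\,r^{N-2}\,|u_r(r)|^{p-1}}.
\]
This makes transparent that a large $|u_{rr}(r_n)|$ can be produced by engineering $\Phi''(r_n)$ to be large while $\Phi'(r_n)$ stays of moderate size.

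Next I would take $h$ nonnegative, nondecreasing, smooth and of small total mass, with $h'(r_n)=y_n$ for prescribed positive values $y_n$ to be chosen later. A concrete construction is to set $h=\sum_n\tilde{\phi}_n$, where each $\tilde{\phi}_n$ is a smoothed monotone step centered at $r_n$ of height $\delta_n=2^{-n}$ and width $\epsilon_n=\delta_n/y_n$, the widths chosen small enough to secure disjoint supports. This yields $h\in(C^2\cap L^1)(0,1]$ with $h'(r_n)=y_n$, bounded on $(0,1]$, and with $\int_0^1 h$ arbitrarily small. Theorem~\ref{theorem_family} then produces a semi-stable, radially decreasing, unbounded $W^{1,p}(B_1)$ solution $u$. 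The condition $g\geq 0$ is, by Lemma~\ref{lem_sol_ext}~i), equivalent to $r^{N-1}|u_r|^{p-1}$ being nondecreasing; substituting $|u_r|^p=\Phi'/((N-1)r^{N-3})$ reduces this to $(N+2p-3)\Phi'+(p-1)r\Phi''\geq 0$, which follows immediately from $\Phi''\geq 0$. A direct calculation gives
\[
\Phi''(r)=\gamma(\gamma-1)r^{\gamma-2}\Bigl(1+\int_0^r h\Bigr)+2\gamma r^{\gamma-1}h(r)+r^\gamma h'(r),
\]
with $\gamma=2\sqrt{(N-1)/(p-1)}$, and the hypothesis $N\geq p+4p/(p-1)$ forces $\gamma\geq 2$, so all three terms are manifestly nonnegative once $h,h'\geq 0$.

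Finally, from $\Phi''(r_n)\geq r_n^\gamma y_n$ and $\Phi'(r_n)$ of order $r_n^{\gamma-1}$ (since $\int_0^{r_n}h$ and $h(r_n)$ are negligible by construction), the formula for $u_{rr}$ yields $|u_{rr}(r_n)|\geq C_{N,p}\,y_n\,r_n^{(\gamma+p-N+2)/p}$. Choosing $y_n$ proportional to $M_n\,r_n^{(N-p-2-\gamma)/p}$ (which is permitted since the $y_n$ are free to be arbitrary positive numbers) then produces $|u_{rr}(r_n)|\geq M_n$, as required. The main obstacle is the joint design of $h$: it must be smooth and integrable, nondecreasing (to force $g\geq 0$ via $\Phi''\geq 0$), yet with arbitrarily large derivative at each $r_n$; the structural fact $\gamma\geq 2$, special to the range $N\geq p+4p/(p-1)$, reconciles these requirements by turning $\Phi''$ into a sum of manifestly nonnegative terms whenever $h,h'\geq 0$.
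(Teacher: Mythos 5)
Your proposal is correct and follows essentially the same route as the paper: take $h$ nonnegative, nondecreasing and bounded with $h'(r_n)$ prescribed, deduce $g\ge 0$ from $\Phi''\ge 0$, bound $|u_r|$ from above using $0\le h\le 1$, and extract from the differentiated identity a lower bound for $-u_{rr}(r_n)$ of the form $E_{N,p}\,r_n^{-\frac{1}{p}(N-2\sqrt{(N-1)/(p-1)}-p-2)}h'(r_n)-F_{N,p}\,r_n^{-\frac{1}{p}(N-2\sqrt{(N-1)/(p-1)}+p-2)}$. The only cosmetic difference is that the paper keeps the subtracted lower-order term explicit when solving for $y_n$ (setting the whole expression equal to $M_n$), which you should do as well instead of taking $y_n$ merely ``proportional to'' $M_n r_n^{(N-p-2-\gamma)/p}$.
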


\begin{proof}
Let $h\in C^2(0, 1]$, increasing, satisfying $0 \leq h\leq 1$. Define $\Phi$ and $u_r$ as in Theorem \ref{theorem_family}. We claim that\\
\begin{itemize}
\item[i)] $u$ is a semi-stable radially decreasing unbounded $\sob{B_1}$ solution of a problem of the type \eqref{mainequation} with $g\geq  0$.
\item[ii)] $\vert u_r(r)\vert \leq D_{N,p}r^{{-\frac{1}{p}\left(N-2\sqrt{\frac{N-1}{p-1}}-2\right)}} $, $\forall r\in (0, 1]$, where $D_{N,p}$ only depends on $N$ and $p$.
\item[iii)] $-u_{rr}(r)\geq E_{N,p}r^{{-\frac{1}{p}\left(N-2\sqrt{\frac{N-1}{p-1}}-p-2\right)}}h'(r)-F_{N,p}r^{{-\frac{1}{p}\left(N-2\sqrt{\frac{N-1}{p-1}}+p-2\right)}}$, $\forall r\in (0,1]$, where $E_{N,p} > 0$ and $F_{N,p}$ only depend on $N$ and $p$.
\end{itemize}

Since $h$ is positive and increasing, then $\Phi'' > 0$. Hence $(N-1)r^{N-3}\vert u_r (r)\vert^p$ is increasing and so is
\[
r^{\frac{p(N-1)}{p-1}}\vert u_r(r)\vert^p=\left (-r^{N-1}\vert u_r(r)\vert^{p-2}u_r(r)\right )^{\frac{p}{p-1}}.
\]
This implies that $-r^{N-1}\vert u_r(r)\vert^{p-2}u_r(r)$ is increasing, which is is equivalent to the positiveness of $g$.

On account of $0 \leq h \leq 1$, we have $\Phi'(r)\leq G_{N,p}r^{2\sqrt{\frac{N-1}{p-1}}-1}$ in $(0, 1]$, for a constant $G_{N,p}$ that only depends on $N$ and $p$. Hence, from the definition of $u_r$, we obtain ii).

From the positiveness of $h$ and $N\geq p+4p/(p-1)$, we obtain $\Phi''(r)\geq r^{2\sqrt{\frac{N-1}{p-1}}}h'(r)$ in $(0,1]$. On the other hand, from the definition of $u_r$ we have $\Phi''(r)=(N-1)\left [(N-3)r^{N-4}\vert u_r(r)\vert^p+pr^{N-3}\vert u_r(r)\vert^{p-2}u_r(r)u_{rr}(r)\right ]$. Therefore, by $ii)$ and the previous inequality, we obtain iii).

Finally, it is easily seen that for every sequences $\{r_n\}\subset (0, 1]$, $\{y_n\}\subset\re^+$, with $r_n\downarrow 0$, there exists $h\in C^2(0, 1]$, increasing, satisfying $0 \leq h \leq 1$ and $h'(r_n)=y_n$. Take $y_n$ such that
\[
E_{N,p}r_n^{{-\frac{1}{p}\left(N-2\sqrt{\frac{N-1}{p-1}}-p-2\right)}}y_n-F_{N,p}r_n^{{-\frac{1}{p}\left(N-2\sqrt{\frac{N-1}{p-1}}+p-2\right)}}=M_n.
\]
Applying $iii)$ we deduce $-u_{rr} (r_n ) \geq M_n$ and the proof is complete.
\end{proof}

\begin{corollary}\label{nohayy}
Let $N\geq p+4p/(p-1)$. There does not exist a function $\psi : (0, 1]\rightarrow \re^+$ with the following property: for every $u\in \sob{B_1}$ semi-stable radially decreasing solution of a problem of the type \eqref{mainequation} with $g\geq 0$, there exist $C > 0$ and $\epsilon\in (0, 1]$ such that $\vert u_{rr} (r)\vert\leq C\psi (r)$ for $r\in (0, \epsilon]$.
\label{corollary_u_rr}
\end{corollary}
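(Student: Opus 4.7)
The plan is to follow exactly the pattern of the proof of Corollary \ref{nohay}, replacing its appeal to the proposition about $|u_r|$ with an appeal to Proposition \ref{proposition_u_rr}, which is tailored for this purpose. The entire work is done by that proposition; the corollary itself is a short contradiction argument.

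Concretely, I would argue by contradiction. Assume that a function $\psi:(0,1]\to\re^+$ with the stated property exists. Choose the sequences
\[
r_n=\frac{1}{n},\qquad M_n=n\,\psi(1/n),\qquad n\in\mathbb{N},
\]
so that $r_n\downarrow 0$ and $\{M_n\}\subset\re^+$. Apply Proposition \ref{proposition_u_rr} to these sequences to obtain a function $u\in\sob{B_1}$ which is a semi-stable, radially decreasing, unbounded solution of some problem of the type \eqref{mainequation} with $g\geq 0$, and which satisfies
\[
|u_{rr}(1/n)|\geq M_n = n\,\psi(1/n)\qquad\forall n\in\mathbb{N}.
\]

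By the assumed property of $\psi$, there exist $C>0$ and $\varepsilon\in(0,1]$ such that $|u_{rr}(r)|\leq C\psi(r)$ for every $r\in(0,\varepsilon]$. Pick any integer $n$ with $n>C$ and $1/n<\varepsilon$; then
\[
C\,\psi(1/n)\;\geq\;|u_{rr}(1/n)|\;\geq\; n\,\psi(1/n)\;>\;C\,\psi(1/n),
\]
a contradiction (we use $\psi(1/n)>0$). This forces the nonexistence of $\psi$ and finishes the proof.

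There is essentially no obstacle: all the delicate analysis—constructing, via the prescribed $\Phi$ and $h$, a semi-stable radially decreasing $W^{1,p}$ solution whose second radial derivative blows up arbitrarily fast along a prescribed sequence while maintaining $g\geq 0$—has already been carried out in Proposition \ref{proposition_u_rr}. The only thing to be careful about is that the contradiction must be drawn along a subsequence of $\{1/n\}$ that lies in $(0,\varepsilon]$, which is immediate since $1/n\to 0$.
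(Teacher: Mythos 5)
Your proof is correct and is exactly the argument the paper uses: the authors prove Corollary \ref{nohayy} by repeating the contradiction argument of Corollary \ref{nohay} with the sequences $r_n=1/n$, $M_n=n\psi(1/n)$, invoking Proposition \ref{proposition_u_rr} in place of the earlier proposition. Your write-up just spells out the final contradiction in slightly more detail.
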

\begin{proof}
Arguing as in Corollary \ref{corollary_u_r} and using Proposition \ref{proposition_u_rr}, we conclude the proof of the corollary.
\end{proof}
\begin{proposition}
Let $\{r_n\}\subset (0,1]$, $\{M_n\}\subset\re^+$ two sequences with $r_n\downarrow 0$. Then, for $N \geq p+4p/(p-1)$, there exists $u\in \sob{B_1}$, which is a semi-stable radially decreasing unbounded solution of a problem of the type \eqref{mainequation} with $g,\,g'\geq 0$, satisfying
\[
\vert u_{rrr}(r_n)\vert\geq M_n\;\;\forall n\in\mathbb{N}.
\]
\label{proposition_u_rrr}
\end{proposition}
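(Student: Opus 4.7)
The plan is to mimic the construction of Proposition~\ref{proposition_u_rr}, but with further restrictions on $h$ that yield the stronger property $g'\geq 0$. Fix $\varepsilon_{N,p}>0$ small (to be pinned down below) and take $h\in C^3(0,1]$ satisfying $0\leq h\leq \varepsilon_{N,p}$, $h'\geq 0$, and $h''(r_n)\leq 0$ with $|h''(r_n)|$ prescribed. Define $\Phi(r)=r^\alpha(1+\int_0^r h(s)\,ds)$ with $\alpha=2\sqrt{(N-1)/(p-1)}$ and let $u_r<0$ solve $\Phi'(r)=(N-1)r^{N-3}|u_r(r)|^p$, exactly as in Theorem~\ref{theorem_family}. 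That theorem immediately gives that $u\in\sob{B_1}$ is a semi-stable radially decreasing unbounded solution of $-\Delta_p u=g(u)$ for some $g\in C^1(\re)$.

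I would then check $g\geq 0$ and $g'\geq 0$. The first is as in Proposition~\ref{proposition_u_rr}: $h'\geq 0$ forces $\Phi''>0$, equivalent to $g\geq 0$. For $g'\geq 0$ I would use $\Psi(s)=Ns^{(N-1)/N}|u_r(s^{1/N})|^{p-1}$ from Lemma~\ref{lemma23}: since $\Psi'(s)=g(u(s^{1/N}))$, the condition $g'\geq 0$ is equivalent to $\Psi$ being concave. In the unperturbed case $h\equiv 0$, $\Psi$ is a constant multiple of $s^\delta$ with $\delta=[\alpha(p-1)+p+N-2]/(pN)\in(0,1]$ (the inequality $\delta\leq 1$ is equivalent to $((p-1)\sqrt{(N-1)/(p-1)}-1)^2\geq 0$), so the baseline $\Psi$ is concave, strictly so in the generic range $N\geq p+4p/(p-1)$. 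Writing $\varphi=\Phi'$, $\mu=(N+2p-3)/p$, $\nu=(p-1)/p$, the concavity of $\Psi$ reduces to
\[
\mu(\mu-N)\varphi^2+\nu(2\mu-N+1)r\varphi\varphi'+\nu(\nu-1)r^2(\varphi')^2+\nu r^2\varphi\varphi''\leq 0.
\]
The only term affected by large $|h''|$ is $\nu r^2\varphi\varphi''$; the sign choice $h''(r_n)\leq 0$ makes this contribution concavity-favoring, and $\|h\|_\infty\leq\varepsilon_{N,p}$ controls the remaining perturbations of the baseline inequality.

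With $g,g'\geq 0$ established, I derive the lower bound on $|u_{rrr}|$ by differentiating $\Phi'=(N-1)r^{N-3}|u_r|^p$ twice and solving for $u_{rrr}$:
\[
u_{rrr}=\frac{(N-3)(N-4)|u_r|}{pr^2}-\frac{2(N-3)u_{rr}}{r}+\frac{(p-1)u_{rr}^2}{|u_r|}-\frac{\Phi'''}{p(N-1)r^{N-3}|u_r|^{p-1}}.
\]
Expanding $\Phi'''=\alpha(\alpha-1)(\alpha-2)r^{\alpha-3}(1+\int_0^rh)+3\alpha(\alpha-1)r^{\alpha-2}h+3\alpha r^{\alpha-1}h'+r^\alpha h''$, using the upper bounds $|u_r|\leq D_{N,p}r^{-(N-\alpha-2)/p}$ and $|u_{rr}|\leq D'_{N,p}r^{-(N-\alpha+p-2)/p}$ (obtained as in Proposition~\ref{proposition_u_rr}), and isolating the $r^\alpha h''$ contribution at the spikes yields
\[
|u_{rrr}(r)|\geq E_{N,p}|h''(r)|\,r^{-(N-\alpha+2p-2)/p}-F_{N,p}r^{-(N-\alpha+2p-2)/p}.
\]
Choosing $y_n$ so that $E_{N,p}y_nr_n^{-(N-\alpha+2p-2)/p}-F_{N,p}r_n^{-(N-\alpha+2p-2)/p}\geq M_n$, and building a $C^3$ function $h$ with $|h''(r_n)|=y_n$, $h''(r_n)\leq 0$, satisfying the global constraints of Step~1 (a standard mollified-bump interpolation at the discrete points $r_n$), we conclude $|u_{rrr}(r_n)|\geq M_n$.

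The main obstacle is preserving $g'\geq 0$ against arbitrarily large $|h''(r_n)|$. The concavity of $\Psi$ is a priori sensitive to $\varphi''=\Phi'''$, which contains $r^\alpha h''$. The resolution is to fix the sign of $h''$ at the spikes so the perturbation helps the concavity, and to keep $\|h\|_\infty$ below $\varepsilon_{N,p}$ so that, away from the spikes, the baseline concavity inherited from the power case $h\equiv 0$ is not overcome.
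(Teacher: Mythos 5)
Your overall strategy is the same as the paper's (same family $\Phi(r)=r^{\alpha}(1+\int_0^r h)$ with $\alpha=2\sqrt{(N-1)/(p-1)}$, same mechanism: the $h''$ term inside $\Phi'''$ drives $u_{rrr}$ at the points $r_n$ while one tries to preserve $g'\ge 0$), and your formula for $u_{rrr}$ and the identification of $g'\geq 0$ with concavity of $\Psi$ are correct. But there is a genuine gap in the step that preserves $g'\geq 0$. The condition $g'(u(r))\geq 0$ is a \emph{pointwise} inequality on $(0,1]$ whose perturbation terms involve $h'(r)$ and $h''(r)$ at every $r$, and your hypotheses control neither: $\|h\|_{\infty}\leq\varepsilon_{N,p}$ together with $h'\geq 0$ only bounds $\int_0^1 h'$, not $\sup h'$, so the first-order terms (e.g.\ $(p-2)(r\varphi'/\varphi)^2$ and the cross term $r\varphi\varphi'$) need not be small pointwise; and you impose $h''(r_n)\leq 0$ only at the discrete points $r_n$, whereas the term $\nu r^2\varphi\varphi''$ sees $h''$ everywhere. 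Worse, your proposed "mollified-bump interpolation" is self-defeating: if $h$ is (near-)constant away from small neighbourhoods of the $r_n$ and $h'\geq 0$, then to realize $h''(r_n)=-y_n$ with $y_n$ huge, $h'$ must first \emph{rise} steeply from $\approx 0$ before it can fall steeply through $r_n$, so $h''$ is necessarily large and \emph{positive} on an interval adjacent to each $r_n$; there the $\varphi''$-term has the unfavourable sign and overwhelms the strictly concave baseline, giving $g'<0$. Smallness of $\|h\|_\infty$ cannot prevent this.

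The paper's lemma preceding this proposition repairs exactly these two points: it works in the class $X=\{h\in C^2(0,1]\cap C^1[0,1]:\ h(0)=0,\ h'\geq 0,\ h''\leq 0\ \text{on all of }(0,1]\}$ with the norm $\|h\|_X=\|h'\|_{L^\infty(0,1)}$. Requiring $h''\leq 0$ \emph{everywhere} (so $h'$ is nonincreasing, with steep but localized drops at the $r_n$ whose total variation is at most $\|h\|_X$) makes the only uncontrolled term, $rh''/(p(N-1)\varphi^{p-1})$ inside $\varphi''$, have the favourable sign at every point; all remaining perturbations of $\varphi,\varphi',\varphi''$ tend to $0$ uniformly as $\|h'\|_{L^\infty}\to 0$. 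You should adopt these two modifications — measure smallness by $\sup|h'|$ and impose $h''\leq 0$ globally — after which your computation goes through. (A minor slip, harmless since it only rescales the choice of $y_n$: the $h''$ contribution to $u_{rrr}$ is of size $|h''(r)|\,r^{3}\,r^{-\frac1p(N-\alpha+2p-2)}$, i.e.\ you dropped the factor $r^{3}$ coming from $r^{\alpha}h''/\bigl(p(N-1)r^{N-3}|u_r|^{p-1}\bigr)$.)
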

\begin{lemma}
For any dimension $N\geq p+4p/(p-1)$, there exists $\epsilon_{N,p} > 0$ with the following property: for every $h\in C^2(0,1]\cap C^1[0,1]$ satisfying $h(0) = 0$, $0\leq h'\leq\epsilon_{N,p}$ and $h''\leq 0$, $u$ is a semi-stable radially decreasing unbounded $\sob{B_1}$ solution of a problem of the type \eqref{mainequation} with $g,\,g'\geq 0$ where $u_r$ is defined in terms of $h$ as in Theorem \ref{theorem_family}.
\end{lemma}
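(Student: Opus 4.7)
The plan is to apply Theorem \ref{theorem_family} for the structural conclusions (semi-stability, decay, $W^{1,p}$-regularity, unboundedness), to recover $g\geq 0$ via the monotonicity argument in Proposition \ref{proposition_u_rr}(i), and to establish $g'\geq 0$ by reducing it to a pointwise ODE inequality whose strictly negative leading term (present because $N\geq p+4p/(p-1)$) absorbs a perturbation of size controlled by $\epsilon_{N,p}$.

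First I would verify the hypotheses of Theorem \ref{theorem_family}: from $h(0)=0$ and $h'\geq 0$ one has $h\geq 0$ on $(0,1]$, and $h\leq \epsilon_{N,p}$ puts $h\in L^1(0,1)$. The theorem then gives that $u$ is a semi-stable radially decreasing unbounded $W^{1,p}(B_1)$ solution of $-\Delta_p u=g(u)$ for some $g\in C^1(\re)$. For $g\geq 0$, since $h$ is nonnegative and nondecreasing, the argument in Proposition \ref{proposition_u_rr}(i) applies to give $\Phi''\geq 0$, whence $\phi(r):=r^{N-1}\vert u_r(r)\vert^{p-1}$ is nondecreasing and $g(u(r))=r^{-(N-1)}\phi'(r)\geq 0$.

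For $g'\geq 0$, since $u$ is strictly decreasing it is equivalent to show that $r\mapsto g(u(r))$ is nonincreasing on $(0,1]$. Using the identity $\phi^{p/(p-1)}=\Psi/(N-1)$ with $\Psi(r):=r^\beta\Phi'(r)$ and $\beta:=(N+2p-3)/(p-1)$, a direct differentiation reduces $(g\circ u)'\leq 0$ to
\[
\Psi''-\frac{(\Psi')^2}{p\Psi}-\frac{N-1}{r}\Psi'\leq 0\quad\text{on }(0,1].
\]
The natural substitution is $\Psi=r^\gamma B$ with $\gamma:=\alpha+\beta-1$, $\alpha:=2\sqrt{(N-1)/(p-1)}$ and $B(r):=\alpha(1+H(r))+rh(r)$, $H(r):=\int_0^r h(s)\,ds$. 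Introducing $\lambda:=B'/B\geq 0$, one has $\Psi'/\Psi=\gamma/r+\lambda$, and the identity $\lambda'=B''/B-\lambda^2$ lets elementary algebra reduce the display to
\[
-\delta\gamma/r^2+c_1\lambda/r-\lambda^2/p+B''/B\leq 0,
\]
where $\delta:=N-(p-1)\gamma/p$ and $c_1:=2(p-1)\gamma/p-(N-1)$ depend only on $N$ and $p$.

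The decisive quantitative point is $\delta>0$ under the hypothesis $N\geq p+4p/(p-1)$. I would verify this via the algebraic identity $(N(p-1)+2-p)^2-4(N-1)(p-1)=(N(p-1)-p)^2\geq 0$, which (since $N(p-1)+2-p>0$ under our assumption) yields $N(p-1)+2-p\geq 2\sqrt{(N-1)(p-1)}$ and hence $\delta\geq 0$, with strict inequality under our hypothesis. Once $\delta>0$, I would bound the perturbative terms using $0\leq h(r)\leq \epsilon_{N,p}\,r$, $0\leq h'\leq \epsilon_{N,p}$, and $h''\leq 0$: since $B\geq \alpha$, this gives $\lambda/r\leq (\alpha+2)\epsilon_{N,p}/\alpha$, $B''/B\leq (\alpha+2)h'/\alpha\leq (\alpha+2)\epsilon_{N,p}/\alpha$, and $-\lambda^2/p\leq 0$. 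Hence the three perturbative terms sum to at most $C_{N,p}\,\epsilon_{N,p}$ for an explicit $C_{N,p}$, and since $\delta\gamma/r^2\geq \delta\gamma$ on $(0,1]$, taking $\epsilon_{N,p}:=\min\{1,\,\delta\gamma/C_{N,p}\}$ secures the desired inequality. The main obstacle I expect is precisely the algebraic reduction via $\Psi=r^\gamma B$ together with the cancellation $(p-1)\lambda^2/p+\lambda'=B''/B-\lambda^2/p$, which tames the quadratic-in-$\lambda$ contribution and leaves a perturbation that is linear in $\epsilon_{N,p}$.
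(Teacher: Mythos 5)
Your proposal is correct, and I checked the key algebraic steps: the identity $(N(p-1)+2-p)^2-4(N-1)(p-1)=(N(p-1)-p)^2$ does hold, the reduction of $(g\circ u)'\leq 0$ to $\Psi''-\frac{(\Psi')^2}{p\Psi}-\frac{N-1}{r}\Psi'\leq 0$ is exact (since $g(u(r))$ is a positive constant times $r^{-(N-1)}\Psi^{-1/p}\Psi'$), and the cancellation $\frac{p-1}{p}\lambda^2+\lambda'=\frac{B''}{B}-\frac{\lambda^2}{p}$ together with the bounds $B\geq\alpha$, $0\leq h\leq\epsilon r$, $B''\leq(\alpha+2)h'$ gives exactly the claimed linear-in-$\epsilon$ control of the perturbation. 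However, your route is genuinely different from the paper's. The paper writes $u_r=-r^{-\frac{1}{p}(N-2\sqrt{(N-1)/(p-1)}-2)}\varphi(r)$, computes $\frac{g'(u)r^2}{p-1}$ explicitly as $-\vert u_r\vert^{p-2}$ times a bracket in $\varphi,\varphi',\varphi''$, and then argues by uniform limits as $\Vert h\Vert_X=\Vert h'\Vert_\infty\to 0$: the bracket converges uniformly in $r$ to a strictly negative constant, the only non-vanishing remainder being $\frac{r^3h''}{p(N-1)\varphi^{p}}$-type terms which have the favorable sign because $h''\leq 0$. That argument is softer (the existence of $\epsilon_{N,p}$ comes from a limit, not from an explicit bound), whereas your substitution $\Psi=r^\gamma B$ and the Riccati-type inequality produce an explicit $\epsilon_{N,p}=\min\{1,\delta\gamma/C_{N,p}\}$, at the cost of more bookkeeping; both hinge on the same mechanism, namely a strictly negative $O(1/r^2)$ leading term absorbing a perturbation linear in $\Vert h'\Vert_\infty$, and your $-\delta\gamma$ plays the role of the paper's negative constant. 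Two cosmetic remarks: your computation actually shows $\delta>0$ whenever $N(p-1)\neq p$, so the hypothesis $N\geq p+4p/(p-1)$ enters only through Theorem \ref{theorem_family} (unboundedness and non-integrability of $u_r$), not through the sign of the leading term; and, like the paper, you should note that $g$ need only be nondecreasing on the range $[u(1),+\infty)$ of $u$ and can then be extended to $\re$ as a nondecreasing $C^1$ function.
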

\begin{proof}
Similarly as in the proof of Proposition \ref{proposition_u_rr} (item $i)$), $h'\geq 0$ implies that $u$ is a semi-stable radially decreasing unbounded $\sob{B_1}$ solution of a problem of the type \eqref{mainequation} with $g\geq 0$.

On the other hand, from the definition of $\Phi$ and $u_r$ it follows easily that
\begin{eqnarray*}
u_r&=&-\left [(N-1)^{-1}r^{-N+3}\Phi'\right ]^{\frac{1}{p}}\\
&=&-r^{{-\frac{1}{p}\left(N-2\sqrt{\frac{N-1}{p-1}}-2\right)}}\left [\frac{2+2\int_0^r{h(s)\,ds}}{\sqrt{(N-1)(p-1)}}+\frac{rh(r)}{N-1}\right ]^{\frac{1}{p}}.
\end{eqnarray*}
Put this last expression in the form $u_r=-r^{{-\frac{1}{p}\left(N-2\sqrt{\frac{N-1}{p-1}}-2\right)}}\varphi(r)$, where $\varphi(r)$ (and of course $u_r$ ) depends on $h$. Now consider the set
\[
X=\left\lbrace h\in C^2(0,1]\cap C^1[0,1]:\,h(0)=0,\,0\leq h',\,h''\leq 0 \right\rbrace,
\]
and the norm $\Vert h\Vert_X=\Vert h'\Vert_{\leb{\infty}{0,1}}$. Taking $\Vert h\Vert_X\rightarrow 0$, we have
\[
\lim\limits_{\Vert h\Vert_X\rightarrow 0}{\varphi}=\left [\frac{2}{\sqrt{(N-1)(p-1)}}\right ]^{\frac{1}{p}},\;\lim\limits_{\Vert h\Vert_X\rightarrow 0}{\varphi'}=0,
\]
\begin{equation}
\lim\limits_{\Vert h\Vert_X\rightarrow 0}{\left (\varphi''-\frac{rh''}{p(N-1)\varphi^{p-1}}\right )}=0,
\label{eq_lemma_urrr}
\end{equation}
where all the limits are taken uniformly in $r \in(0, 1]$. On the other hand, it is easy to check that
\begin{eqnarray*}
\frac{g'(u)r^2}{p-1}&=&-\vert u_r\vert^{p-2}\left [(p-2)\left (\frac{ru_{rr}}{u_r}\right )^2+(N-1)\left (\frac{ru_{rr}}{u_r}\right )\right.\\
&&\left.+\left (\frac{r^2u_{rrr}}{u_r}\right )-\frac{N-1}{p-1}\right]\\
&=&-\vert u_r\vert^{p-2}\left [(p-2)\left (\frac{r\varphi'}{\varphi}\right )^2+\frac{r^2\varphi''}{\varphi}\right.\\
&&\left.+\frac{1}{p}\left ((2-p)N+4(p-1)\sqrt{\frac{N-1}{p-1}}+3p-4\right )\left (\frac{r\varphi'}{\varphi}\right )\right.\\
&&\left .-\frac{1}{p^2}\left (N-2\sqrt{\frac{N-1}{p-1}}-2\right  )\left  (N+2(p-1)\sqrt{\frac{N-1}{p-1}}-2\right )\right.\\
&&\left.-\frac{N-1}{p-1}\right].\\
\end{eqnarray*}
Hence, from \eqref{eq_lemma_urrr}, we can assert that, for $h\in X$  with small $\Vert h\Vert_X$, $r^2g'(u)\geq 0$ in $(0,1]$ and the lemma follows.
\end{proof}
\begin{proof}[Proof of Proposition \ref{proposition_u_rrr}]
We follow the notation used in the previous lemma. From \eqref{eq_lemma_urrr}, we deduce that
\[
\lim\limits_{\Vert h\Vert_X\rightarrow 0}{\left (r^{{\frac{1}{p}\left(N-2\sqrt{\frac{N-1}{p-1}}+2p-2\right)}} u_{rrr}+\frac{r^3h''}{p(N-1)\varphi^{p-1}}\right )}=\sigma,
\]
uniformly in $r\in(0, 1]$, where

\[\sigma=-2^{\frac{1}{p}}\frac{\left [{\left (-\frac{1}{p}\right )\left(N-2\sqrt{\frac{N-1}{p-1}}-2\right)}\right ]\left [{\left (-\frac{1}{p}\right )\left(N-2\sqrt{\frac{N-1}{p-1}}+p-2\right)}\right ]}{\left (\sqrt{(N-1)(p-1)}\right)^{\frac{1}{p}}}<0.
\]
Then, taking $\epsilon'_{N,p}>0$ sufficient small (possibly less than $\epsilon_{N,p}$), we have that
\[
r^{{\frac{1}{p}\left(N-2\sqrt{\frac{N-1}{p-1}}+2p-2\right)}} u_{rrr}\geq-\frac{r^3h''}{p(N-1){\left [\frac{2}{\sqrt{(N-1)(p-1)}}\right ]^{\frac{p-1}{p}}}}+\sigma-1,\,\forall r\in(0,1],
\]
for $\Vert h\Vert_X\leq \epsilon'_{N,p}$.

Finally, it is easily seen that for every sequences $\{r_n\}\subset (0, 1]$, $\{y_n\}\subset\re^+$, with $r_n\downarrow 0$, there exists $h\in X$, with $\Vert h\Vert_X\leq \epsilon'_{N,p}$, satisfying $h''(r_n)=-y_n$. Take $y_n$ such that
\[
r_n^{{-\frac{1}{p}\left(N-2\sqrt{\frac{N-1}{p-1}}+2p-2\right)}}M_n=\frac{r^3y_n}{p(N-1){\left [\frac{2}{\sqrt{(N-1)(p-1)}}\right ]^{\frac{p-1}{p}}}}+\sigma-1.
\]
Applying the above inequality, we obtain $u_{rrr}(r_n)\geq M_n$ and the proof is complete.
\end{proof}
\begin{corollary}\label{nohayyy}
Let $N\geq p+4p/(p-1)$. There does not exist a function $\psi : (0, 1]\rightarrow \re^+$ with the following property: for every $u\in \sob{B_1}$ semi-stable radially decreasing solution of a problem of the type \eqref{mainequation} with $g,\,g'\geq 0$, there exist $C > 0$ and $\epsilon\in (0, 1]$ such that $\vert u_{rrr} (r)\vert\leq C\psi (r)$ for $r\in (0, \epsilon]$.
\end{corollary}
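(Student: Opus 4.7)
The plan is to mimic the argument of Corollaries \ref{nohay} and \ref{nohayy}, using Proposition \ref{proposition_u_rrr} as the engine that produces, for any prescribed blow-up sequence, a semi-stable radially decreasing $W^{1,p}(B_1)$ solution with $g,g'\geq 0$ whose third derivative is at least as large as prescribed on that sequence. Since all the delicate construction work (the choice of $h\in X$ with $\Vert h\Vert_X\leq\epsilon'_{N,p}$, and the verification that $u$ is semi-stable with $g,g'\geq 0$) has already been carried out in the preceding lemma and proposition, the corollary is a short diagonalization.

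Concretely, I argue by contradiction. Suppose that such a function $\psi:(0,1]\rightarrow\re^+$ exists. Consider the sequences
\[
r_n=\frac{1}{n},\qquad M_n=n\,\psi\!\left(\frac{1}{n}\right),\quad n\in\mathbb{N},
\]
so that $r_n\downarrow 0$ and $\{M_n\}\subset\re^+$. By Proposition \ref{proposition_u_rrr}, there exists $u\in W^{1,p}(B_1)$, a semi-stable radially decreasing unbounded solution of a problem of the type \eqref{mainequation} with $g,g'\geq 0$, such that
\[
\left\vert u_{rrr}\!\left(\tfrac{1}{n}\right)\right\vert \geq M_n = n\,\psi\!\left(\tfrac{1}{n}\right),\quad\forall n\in\mathbb{N}.
\]
Applying the assumed property of $\psi$ to this particular $u$, there exist $C>0$ and $\epsilon\in(0,1]$ with $\vert u_{rrr}(r)\vert\leq C\psi(r)$ for all $r\in(0,\epsilon]$. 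Choosing $n$ large enough that $1/n\leq\epsilon$, we deduce $n\,\psi(1/n)\leq C\,\psi(1/n)$, i.e. $n\leq C$ for all sufficiently large $n$, a contradiction. Hence no such $\psi$ can exist.

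The step I expect to be the main obstacle is actually not in this corollary at all: it has already been absorbed into Proposition \ref{proposition_u_rrr} and its preparatory lemma, where one must simultaneously guarantee the sign conditions $g\geq 0$ and $g'\geq 0$ while forcing $\vert u_{rrr}(r_n)\vert$ to be arbitrarily large. Once that proposition is in hand, the corollary itself is a one-line contradiction, exactly parallel to Corollaries \ref{nohay} and \ref{nohayy}.
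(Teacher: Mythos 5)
Your proof is correct and follows exactly the paper's own argument: the paper likewise deduces the corollary by applying Proposition \ref{proposition_u_rrr} with $r_n=1/n$, $M_n=n\psi(1/n)$ and running the same contradiction as in Corollaries \ref{nohay} and \ref{nohayy}. No issues.
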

\begin{proof}
Applying Proposition \ref{proposition_u_rrr}, this follows by the same method as in Corollaries \ref{corollary_u_r} and \ref{corollary_u_rr}.
\end{proof}

\end{document}